\newtheorem*{maintheorem*}{Main Theorem}
\newtheorem{theorem}{Theorem}[section]
\newtheorem{prop}[theorem]{Proposition}
\newtheorem{question}[theorem]{Question}
\newtheorem{lemma}[theorem]{Lemma}
\theoremstyle{definition}
\newtheorem{definition}[theorem]{Definition}
\newtheorem{remark}[theorem]{Remark}
\newtheorem{example}[theorem]{Example}
\numberwithin{equation}{section}
\newcommand{\cc}{\mathbb{C}}
\newcommand{\nn}{\mathbb{N}}
\newcommand{\qq}{\mathbb{Q}}
\newcommand{\rr}{\mathbb{R}}
\newcommand{\zz}{\mathbb{Z}}
\newcommand{\uu}{\mathcal{U}}
\newcommand{\mcd}{\text{mcd}}
\newcommand{\ord}{\text{ord}}
\newcommand{\red}{\text{red}}
\providecommand\ldb{\llbracket}
\providecommand\rdb{\rrbracket}
\keywords{Semidomain, polynomial semiring, integral domain, atomicity, finite factorization, bounded factorization, ACCP, factoriality, length-factoriality}
\subjclass[2010]{Primary: 16Y60, 11C08; Secondary: 20M13, 13F05}
\begin{document}
	
	\mbox{}
	\title{On the arithmetic of polynomial semidomains}
	
	\author{Felix Gotti}
	\address{Department of Mathematics\\MIT\\Cambridge, MA 02139}
	\email{fgotti@mit.edu}
	
	\author{Harold Polo}
	\address{Department of Mathematics\\University of Florida\\Gainesville, FL 32611}
	\email{haroldpolo@ufl.edu}
	
\date{\today}

\begin{abstract}
	 A subset $S$ of an integral domain $R$ is called a semidomain provided that the pairs $(S,+)$ and $(S, \cdot)$ are semigroups with identities. The study of factorizations in integral domains was initiated by Anderson, Anderson, and Zafrullah in 1990, and this area has been systematically investigated since then. In this paper, we study the divisibility and arithmetic of factorizations in the more general context of semidomains. We are specially concerned with the ascent of the most standard divisibility and factorization properties from a semidomain to its semidomain of (Laurent) polynomials. As in the case of integral domains, here we prove that the properties of satisfying the ascending chain condition on principal ideals, having bounded factorizations, and having finite factorizations ascend in the class of semidomains. We also consider the ascent of the property of being atomic and that of having unique factorization (none of them ascends in general). Throughout the paper we provide several examples aiming to shed some light upon the arithmetic of factorizations of semidomains.
\end{abstract}
\medskip

\maketitle


\bigskip
\section{Introduction}
\label{sec:intro}

A subset of an integral domain containing $0$ and $1$ and closed under both addition and multiplication is called a semidomain. As for integral domains, we say that a semidomain is atomic if every nonzero element that is not a multiplicative unit factors into irreducibles. The first systematic study of factorizations in the context of integral domains was carried out by Anderson, Anderson, and Zafrullah in~\cite{AAZ90}, where they not only introduced and studied the bounded and the finite factorization properties but also investigated further factorization properties, including being atomic, satisfying the ascending chain condition on principal ideals (ACCP), and being factorial or half-factorial. With the same properties in mind, here we study the arithmetic of the more general class of semidomains, putting special emphasis on whether such properties ascend from a semidomain to its semidomain of (Laurent) polynomials.
\smallskip

A commutative semiring $S$ is a nonempty set endowed with two compatible binary operations denoted by `$+$' and `$\cdot$' such that $(S,+)$ and $(S, \cdot)$ are commutative semigroups with identities. Commutative semirings consisting of nonnegative real numbers (under the standard addition and multiplication) are called positive semirings. Clearly, every positive semiring is a semidomain. The atomicity of positive semirings consisting of rational numbers was first studied by Chapman, Gotti, and Gotti~\cite{CGG20} and then by Albizu-Campos, Bringas, and Polo~\cite{ABP21}. Positive semirings were also studied by Baeth and Gotti~\cite{BG20} in connection with factorizations of certain square matrices. Several examples of positive semirings were recently given by Baeth, Chapman, and Gotti in~\cite{BCG21}, where for the first time additive and multiplicative factorizations in positive semirings were considered simultaneously.
\smallskip

The algebraic structures of central interest in this paper are semidomains of polynomials and semidomains of Laurent polynomials. The arithmetic of certain semidomains of (Laurent) polynomials has been considered in the literature in the past few years. In~\cite{CCMS09}, Cesarz et al. studied the elasticity of the semidomain $\rr_{\ge 0}[x]$, where $\rr_{\ge 0}$ is the nonnegative ray of $\rr$. In addition, methods to factorize polynomials in the semidomain $\nn_0[x]$ were investigated by Brunotte~\cite{hB13}. More recently, Campanini and Facchini~\cite{CF19} provided a more systematic investigation of the arithmetic of factorizations in~$\nn_0[x]$. More generally, semigroup semirings were considered by Ponomarenko in~\cite{vP15} from the factorization point of view. Finally, it is worth emphasizing that the study of the algebraic structure of commutative semirings has been significantly motivated by the applications and connections of commutative semirings to theoretical computer science (see, for instance, \cite{DKV09} and the references therein).
\smallskip

The positive semirings we obtain as homomorphic images of semidomains of polynomials and semidomains of Laurent polynomials have also been investigated recently. Atomic and factorization aspects of the additive structure of the semidomain $\nn_0[\alpha]$, where $\alpha$ is a positive algebraic number, were first investigated by Correa-Morris and Gotti~\cite{CG22}. More recently, the elasticity and the omega-primality of $\nn_0[\alpha]$ have been considered by Jiang, Li, and Zhu~\cite{JLZ22}. Also, the atomicity of the multiplicative structure of $\nn_0[\alpha]$ was studied in~\cite{CCMS09}, where the authors proved that $\nn_0[\alpha]$ has full infinite elasticity for reasonable quadratic algebraic integers~$\alpha$. On the other hand, Zhu~\cite{sZ22} has recently studied several factorization aspects of the homomorphic image $\nn_0[\alpha^{\pm 1}]$ of the semidomain of Laurent polynomials $\nn_0[x^{\pm 1}]$, where~$\alpha$ is a positive algebraic number.
\smallskip

In Section~\ref{sec:background}, we briefly revise the definitions and terminology relevant to this paper. 
\smallskip

In Section~\ref{sec:atomicity}, we study the property of being atomic in the context of semidomains. It was proved by M. Roitman~\cite[Proposition~1.1]{mR93} that atomicity ascends from any integral domain $R$ to the ring of polynomials $R[x]$ provided that the set of coefficients of any indecomposable polynomial over $R$ has a maximal common divisor. We start Section~\ref{sec:atomicity} generalizing this result: we prove that under the same condition, atomicity ascends from any semidomain to its semidomain of (Laurent) polynomials. Given the relevant role played by the existence of maximal common divisors in the ascent of atomicity, we also prove that the existence of maximal common divisors is a condition that ascends from any semidomain to its (Laurent) polynomial semidomain. We also justify why we do not consider the ascent of the studied factorization properties to power series semidomains: we show in Example~\ref{ex:power series extensions} that the semidomain of power series $\nn_0\ldb x \rdb$ is not atomic (even though $\nn_0$ satisfies the unique factorization property).
\smallskip

In Section~\ref{sec:ACCP}, we investigate ACCP. First, we prove that the property of satisfying ACCP ascends from any semidomain to its semidomain of (Laurent) polynomials. In~\cite[Theorem~1.3]{aG74}, Grams constructed a celebrated example of an atomic domain that does not satisfy ACCP, disproving a wrong assertion made by Cohn~\cite{pC68} on the equivalence of the condition of being atomic and that of satisfying ACCP. Further examples have been given by Zaks~\cite{aZ82} and Roitman~\cite{mR93} and, more recently, by Boynton and Coykendall~\cite{BC19} and also by Gotti and Li~\cite{GL21,GL23,GL22}. In Example~\ref{ex:atomic positive semiring without ACCP}, we construct a positive semiring (which, clearly, cannot be an integral domain) that is atomic but does not satisfy ACCP.
\smallskip

In Section~\ref{sec:bounded and finite factorization properties}, we consider both the bounded and the finite factorization properties. For the rest of this section, let $S$ be a semidomain. We say that~$S$ is a bounded factorization semidomain (BFS) if there is a function $\ell \colon S \setminus \{0\} \to \nn_0$ that is only zero on units and satisfies that $\ell(rs) \ge \ell(r) + \ell(s)$ for all $r,s \in S \setminus \{0\}$. On the other hand, we say that~$S$ is a finite factorization semidomain (FFS) if every nonzero element has finitely many divisors up to associates. Both notions are extensions of the corresponding notions introduced in~\cite{AAZ90} in the setting of integral domains. It is well known that the bounded and the finite factorization properties ascend from an integral domain to its ring of (Laurent) polynomials (see \cite[Propositions~2.5 and~5.3]{AAZ90} and \cite[Corollary~2.2]{AAZ92}). In Theorems~\ref{thm:BF transfers} and~\ref{thm:FF transfers}, we extend these results by proving that the same properties ascend from any semidomain to its semidomain of (Laurent) polynomials.
\smallskip

In Section~\ref{sec:factoriality}, we study factorial and length-factorial semidomains. As for integral domains, we say that~$S$ is a factorial semidomain or a unique factorization semidomain (UFS) provided that every nonzero element of~$S$ has a unique factorization into irreducibles. On the other hand, following the terminology introduced by Chapman et al. in~\cite{CCGS21}, we say that $S$ is a length-factorial semidomain (LFS) if $S$ is atomic and any two distinct factorizations of the same element of~$S$ have distinct numbers of irreducible factors (counting repetitions). Length-factoriality was first studied by Coykendall and Smith in~\cite{CS11}, where they used the same notion to characterize UFDs. Length-factoriality has been more recently investigated by several authors in~\cite{CCGS21, GZ21,fG20} in the more general context of commutative monoids. We prove that integral domains are the only semidomains where the unique factorization and the length-factorial properties ascend to their corresponding (Laurent) polynomial semidomains: this is obtained as a combination of Proposition~\ref{prop:LFS polynomial semiring has coefficients in an integral domain} and Theorem~\ref{UFD}. We conclude the paper with a few words on half-factoriality and the elasticity of semidomains of polynomials.

\bigskip
\section{Preliminaries}
\label{sec:background}

In this section, we introduce the notation and terminology we shall be using later. For a comprehensive background on factorization theory and semiring theory, the reader can consult \cite{GH06} and \cite{JG1999}, respectively. Following standard notation, we let $\zz, \qq$, and $\rr$ denote the sets of integers, rational numbers, and real numbers, respectively. Additionally, we let $\nn$ denote the set of positive integers, while we let $\nn_0$ denote the set of nonnegative integers. Given $r \in \rr$ and $S \subseteq \rr$, we set $S_{<r} \coloneqq \{s \in S \mid s < r\}$; we define $S_{>r}$ and $S_{\geq r}$ in a similar way. For $m,n \in \nn_0$, we denote by $\llbracket m,n \rrbracket$ the discrete interval from $m$ to $n$; that is, $\llbracket m,n \rrbracket \coloneqq \{k \in \zz \mid m \leq k \leq n\}$. Finally, for $q \in \qq_{>0}$,  the relatively prime positive integers $n$ and $d$ satisfying that $q = \frac nd$ are denoted here by $\mathsf{n}(q)$ and $\mathsf{d}(q)$, respectively.

\medskip
\subsection{Monoids and Factorizations}

Throughout this paper, a \emph{monoid}\footnote{The standard definition of a monoid does not assume the cancellative and the commutative conditions.} is defined to be a semigroup with identity that is cancellative and commutative. As we are primarily interested in the multiplicative structure of certain semirings, unless otherwise specified we will use multiplicative notation for monoids. Let $M$ be a monoid with identity~$1$. A subsemigroup of $M$ is called a \emph{submonoid} if it contains $1$. We set $M^{\bullet} \coloneqq M \setminus \{1\}$, and we let $\mathcal{U}(M)$ denote the group of units (i.e., invertible elements) of~$M$. In addition, we let $M_{\red}$ denote the quotient $M/\mathcal{U}(M)$, which is also a monoid. The monoid $M$ is \emph{reduced} provided that $\mathcal{U}(M)$ is the trivial group, in which case we naturally identify $M_{\red}$ with $M$. The \emph{Grothendieck group} of~$M$ is an abelian group $\mathcal{G}(M)$ for which there exists a monoid homomorphism $\iota \colon M \to \mathcal{G}(M)$ satisfying the following universal property: for any monoid homomorphism $f \colon M \to G$, where $G$ is an abelian group, there exists a unique group homomorphism $g \colon \mathcal{G}(M) \to G$ such that $f = g \circ \iota$. The Grothendieck group of a monoid is unique up to isomorphism. For a subset $S$ of $M$, we let $\langle S \rangle$ denote the smallest submonoid of $M$ containing~$S$, and~$S$ is a \emph{generating set} of~$M$ provided that $M = \langle S \rangle$.

For $b,c \in M$, we say that $b$ \emph{divides} $c$ \emph{in} $M$ if there exists $a \in M$ such that $c = ab$, in which case we write $b \mid_M c$, dropping the subscript precisely when $M = (\nn, \times)$. Two elements $b,c \in M$ are \emph{associates} if $b \mid_M c$ and $c \mid_M b$. A submonoid $N$ of $M$ is \emph{divisor-closed} if for each $b \in N$ and $d \in M$ the relation $d \mid_M b$ implies that $d \in N$. Let $S$ be a nonempty subset of $M$. An element $d \in M$ is a \emph{common divisor} of~$S$ provided that $d \mid_M s$ for all $s \in S$. A common divisor $d$ of $S$ is a \emph{greatest common divisor} of~$S$ if $d$ is divisible by all common divisors of $S$. Also, a common divisor $d$ of $S$ is a \emph{maximal common divisor} of $S$ if every common divisor of the set $\{s/d \mid s \in S\}$ belongs to $\uu(M)$. We let $\gcd_M(S)$ (resp., $\mcd_M(S)$) denote the set consisting of all greatest common divisors (resp., maximal common divisors) of $S$, dropping the subindex $M$ in the introduced notation when we see no danger of ambiguity. The monoid $M$ is a \emph{GCD-monoid} (resp., an \emph{MCD-monoid}) provided that every finite nonempty subset of~$M$ has a greatest common divisor (resp., maximal common divisor).

An element $a \in M \setminus \uu(M)$ is an \emph{atom} if for all $b,c \in M$ the equality $a = bc$ implies that either $b \in \uu(M)$ or $c \in \uu(M)$. We let $\mathcal{A}(M)$ denote the set of all atoms of $M$. The monoid $M$ is \emph{atomic} if every element of $M \setminus \uu(M)$ can be written as a (finite) product of atoms. One can readily check that~$M$ is atomic if and only if $M_{\red}$ is atomic. A subset $I$ of $M$ is an \emph{ideal} of $M$ provided that $I M \subseteq I$ or, equivalently, $I M = I$. An ideal $I$ of $M$ is \emph{principal} if $I = b M$ for some $b \in M$. The monoid~$M$ satisfies the \emph{ascending chain condition on principal ideals} (\emph{ACCP}) if every increasing sequence of principal ideals of $M$ (under inclusion) becomes constant from one point on. It is well known and not hard to verify that monoids satisfying ACCP are atomic.

Assume now that $M$ is atomic. We let $\mathsf{Z}(M)$ denote the free (commutative) monoid on $\mathcal{A}(M_{\red})$. The elements of $\mathsf{Z}(M)$ are called \emph{factorizations}, and if $z = a_1 \cdots a_\ell \in \mathsf{Z}(M)$ for $a_1, \ldots, a_\ell \in \mathcal{A}(M_{\red})$, then $\ell$ is called the \emph{length} of $z$ and is denoted by $|z|$. Let $\pi \colon \mathsf{Z}(M) \to M_{\red}$ be the unique monoid homomorphism satisfying that $\pi(a) = a$ for all $a \in\mathcal{A}(M_{\red})$. For each $b \in M$, the following sets associated to $b$ are fundamental in the study of factorization theory:
\begin{equation} \label{eq:sets of factorizations/lengths}
	\mathsf{Z}_M(b) \coloneqq \pi^{-1}(b \mathcal{U}(M)) \subseteq \mathsf{Z}(M) \hspace{0.6 cm}\text{ and } \hspace{0.6 cm}\mathsf{L}_M(b) \coloneqq \{|z| : z \in\mathsf{Z}_M(b)\} \subseteq \nn_0.
\end{equation}
We drop the subscript $M$ in~\eqref{eq:sets of factorizations/lengths} whenever the same monoid is clear from the context. Following the terminology in~\cite{AAZ90} and~\cite{fHK92}, we say that $M$ is a \emph{finite factorization monoid} (\emph{FFM}) if $\mathsf{Z}(b)$ is finite for all $b \in M$, and we say that~$M$ is a \emph{bounded factorization monoid} (\emph{BFM}) if $\mathsf{L}(b)$ is finite for all $b \in M$. It follows directly from the definitions that every FFM is a BFM and, by virtue of \cite[Corollary~1.3.3]{GH06}, every BFM satisfies ACCP. Following the terminology in~\cite{aZ76}, we say that~$M$ is a \emph{half-factorial monoid} (\emph{HFM}) if $|\mathsf{L}(b)| = 1$ for all $b \in M$. Finally, $M$ is a \emph{unique factorization monoid} (\emph{UFM}) if $|\mathsf{Z}(b)| = 1$ for all $b \in M$. It follows from the definitions that every UFM is an HFM and also that every HFM is a BFM. The atomic classes defined in this paragraph are those represented in the square of the diagram in Figure~\ref{fig:AAZ's atomic chain for monoids}. The same diagram was introduced by Anderson, Anderson, and Zafrullah in~\cite{AAZ90} and, since then, it has been used as a methodology to study atomicity and the phenomenon of multiple factorizations. Finally, we follow the terminology in~\cite{CCGS21} and say that~$M$ is a \emph{length-factorial monoid} (\emph{LFM}) if for all $b \in M$ and $z, z' \in \mathsf{Z}(b)$, the equality $|z| = |z'|$ implies $z = z'$. It is clear that every UFM is an LFM.
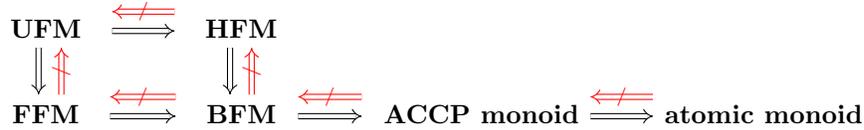
\begin{figure}[h]
	\begin{tikzcd}
		\textbf{ UFM } \ \arrow[r, Rightarrow] \arrow[red, r, Leftarrow, "/"{anchor=center,sloped}, shift left=1.7ex] \arrow[d, Rightarrow, shift right=1ex] \arrow[red, d, Leftarrow, "/"{anchor=center,sloped}, shift left=1ex]& \ \textbf{ HFM } \arrow[d, Rightarrow, shift right=0.6ex] \arrow[red, d, Leftarrow, "/"{anchor=center,sloped}, shift left=1.3ex] \\
		\textbf{ FFM } \ \arrow[r, Rightarrow]	\arrow[red, r, Leftarrow, "/"{anchor=center,sloped}, shift left=1.7ex] & \ \textbf{ BFM } \arrow[r, Rightarrow]  \arrow[red, r, Leftarrow, "/"{anchor=center,sloped}, shift left=1.7ex] & \textbf{ ACCP monoid}  \arrow[r, Rightarrow] \arrow[red, r, Leftarrow, "/"{anchor=center,sloped}, shift left=1.7ex]  & \textbf{atomic monoid}
	\end{tikzcd}
	\caption{The implications in the diagram determine inclusions among the subclasses of atomic monoids that we have previously mentioned. The diagram also emphasizes (with red marked arrows) that each of the mentioned inclusions is proper.}
	\label{fig:AAZ's atomic chain for monoids}
\end{figure}

\medskip
\subsection{Semirings}

A \emph{commutative semiring} $S$ is a nonempty set endowed with two binary operations denoted by `$+$' and `$\cdot$' and called \emph{addition} and \emph{multiplication}, respectively, such that the following conditions hold:
\begin{itemize}
	\item $(S,+)$ is a monoid with its identity element denoted by $0$;
	\smallskip
	
	\item $(S, \cdot)$ is a commutative semigroup with an identity element denoted by $1$ with $1 \neq 0$;
	\smallskip
	
	\item $b \cdot (c+d)= b \cdot c + b \cdot d$ for all $b, c, d \in S$.
\end{itemize}

Let $S$ be a commutative semiring. Since the operation of addition in $S$ is cancellative, the distributive law ensures that $0 \cdot b = 0$ for all $b \in S$. Throughout this paper, for any $b,c \in S$, we write $b c$ instead of $b \cdot c$ when there seems to be no risk of ambiguity. In the typical definition of a `semiring' $S$, one does not assume that the semigroup $(S,+)$ is cancellative. However, we do so here because our semirings of interest have cancellative additive structures. On the other hand, it is worth observing that a more general notion of a `semiring' $S$ does not assume that the semigroup $(S, \cdot)$ is commutative. However, once again, this more general type of algebraic objects are not of interest in the scope of this paper. Accordingly, from now on we will use the single term \emph{semiring} to refer to a commutative semiring, tacitly assuming the commutativity of both operations. A subset $S'$ of~$S$ is a \emph{subsemiring} provided that $(S',+)$ is a submonoid of $(S,+)$ that is closed under multiplication and contains~$1$. Observe that every subsemiring of $S$ is a semiring. The set $S[x]$ (resp., $S[x^{\pm 1}]$) consisting of all polynomials  (resp., Laurent polynomials) with coefficients in $S$ is also a semiring under the standard addition and multiplication of polynomials (resp., Laurent polynomials). We call $S[x]$ (resp., $S[x^{\pm 1}]$) the \emph{semiring of polynomials} (resp., \emph{semiring of Laurent polynomials}) over~$S$.

\begin{definition}
	We say that a semiring $S$ is a \emph{semidomain} provided that $S$ is a subsemiring of an integral domain.
\end{definition}
 
Let $S$ be a semidomain. Then $(S \setminus \{0\}, \cdot)$ is a monoid, which we denote by $S^*$ and call the \emph{multiplicative monoid} of $S$. In Example~\ref{ex:semiring not integral}, we show a semiring that is not a semidomain but still its subset of nonzero elements is a multiplicative monoid. In order to reuse notation from ring theory, we refer to the units of the monoid $(S,+)$ as \emph{invertible elements} of the semidomain $S$, so that we can refer to the units of the multiplicative monoid~$S^*$ simply as \emph{units} of $S$, avoiding any risk of ambiguity. Also, following standard notation from ring theory, we let $S^\times$ denote the group of units of $S$, letting $\uu(S)$ refer to the additive group of invertible elements of $S$. In addition, we write $\mathcal{A}(S)$ instead of $\mathcal{A}(S^*)$ for the set of atoms of the multiplicative monoid~$S^*$ (in this paper, we do not consider the set of atoms of the additive monoid of a semidomain, except briefly in Example~\ref{ex:power series extensions}). Finally, for any $b,c \in S$ such that $b$ divides~$c$ in~$S^*$, we write $b \mid_S c$ instead of $b \mid_{S^*} c$, and the term (greatest, maximal) common divisor in the context of semidomains are to be understood in the multiplicative monoid $S^*$.
\smallskip

For the next example, we need the following lemma.

\begin{lemma} \label{lem:characterization of integral semirings}
	For a semiring $S$, the following conditions are equivalent.
	\begin{enumerate}
		\item[(a)] $S$ is a semidomain.
		\smallskip
		
		\item[(b)] The multiplication of $S$ extends to the Grothendieck group $\mathcal{G}(S)$ of $(S,+)$ turning $\mathcal{G}(S)$ into an integral domain.
	\end{enumerate}
\end{lemma}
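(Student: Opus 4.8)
The plan is to realize the Grothendieck group $\mathcal{G}(S)$ of the additive monoid $(S,+)$ concretely and to play the two conditions off against the same integral domain. Recall first that, by our standing convention, $(S,+)$ is a cancellative commutative monoid, so the canonical map $S \to \mathcal{G}(S)$ is injective and we may regard $S$ as an additive submonoid of $\mathcal{G}(S)$; moreover every element of $\mathcal{G}(S)$ has the form $a - b$ with $a,b \in S$. Note also that, since $S$ generates $\mathcal{G}(S)$ as a group, distributivity forces any extension of the multiplication of $S$ to $\mathcal{G}(S)$ to be given by $(a-b)(c-d) = (ac+bd) - (ad+bc)$; hence such an extension is \emph{unique} if it exists, and the content of condition~(b) is really its existence together with the absence of zero divisors.

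For the implication (b) $\Rightarrow$ (a), I would observe that it is essentially immediate: if the multiplication of $S$ extends to $\mathcal{G}(S)$ so as to make $\mathcal{G}(S)$ an integral domain, then $S$ sits inside $\mathcal{G}(S)$ as a subset closed under the restricted addition and multiplication and containing $0$ and $1$, i.e.\ $S$ is a subsemiring of the integral domain $\mathcal{G}(S)$. By definition this says precisely that $S$ is a semidomain.

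The substantive direction is (a) $\Rightarrow$ (b). Assume $S$ is a subsemiring of an integral domain $R$, and set $R' \coloneqq \{a - b : a, b \in S\} \subseteq R$, the additive subgroup of $R$ generated by $S$. The first step is to check that $R'$ is a subring of $R$: it contains $0$ and $1$, is closed under subtraction by construction, and is closed under multiplication because $(a-b)(c-d) = (ac+bd) - (ad+bc)$ with $ac+bd, ad+bc \in S$. Being a subring of the integral domain $R$ that contains $1 \ne 0$, the ring $R'$ is itself an integral domain. The second step is to identify $\mathcal{G}(S)$ with $R'$: the inclusion $S \hookrightarrow R'$ is a homomorphism from the additive monoid of $S$ into the abelian group $(R',+)$, so by the universal property of the Grothendieck group it extends to a group homomorphism $\varphi \colon \mathcal{G}(S) \to R'$, explicitly $\varphi(a-b) = a - b$. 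This $\varphi$ is surjective because $R' = S - S$, and injective because $a - b = c - d$ in $R$ forces $a + d = b + c$ in $S$ (the two sums lie in $S$, and $R$ induces the addition of $S$), which is exactly the relation defining the equality of $a-b$ and $c-d$ in $\mathcal{G}(S)$. Transporting the ring structure of $R'$ along the isomorphism $\varphi$ then endows $\mathcal{G}(S)$ with a multiplication making it an integral domain, and since $\varphi$ is the identity on $S$ this multiplication extends that of $S$. This yields~(b).

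The only place demanding care is the identification step: one must confirm that the difference map $\varphi$ is well defined and injective, which is where the cancellativity of $(S,+)$ and the faithfulness of the embedding $S \hookrightarrow R$ are used. Everything else reduces to a routine verification of the ring axioms on $R'$ and the observation that a subring of a domain containing $1$ is again a domain.
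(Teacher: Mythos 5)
Your proof is correct and takes essentially the same approach as the paper: both arguments realize $\mathcal{G}(S)$ as the subgroup $\{a - b \mid a, b \in S\}$ of the ambient integral domain $R$ and let it inherit the multiplication from $R$, with (b) $\Rightarrow$ (a) being immediate. You simply make explicit the verifications the paper leaves as ``easy to see'' (well-definedness and injectivity of the identification via cancellativity of $(S,+)$, closure of the difference group under multiplication), plus an extra uniqueness observation the paper does not need.
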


\begin{proof}
	(b) $\Rightarrow$ (a): This is clear.
	\smallskip
	
	(a) $\Rightarrow$ (b): Let $S$ be a semidomain, and suppose that $S$ is embedded into an integral domain~$R$. We can identify the Grothendieck group $\mathcal{G}(S)$ of $(S,+)$ with the subgroup $\{r - s \mid r,s \in S\}$ of the underlying additive group of $R$. It is easy to see then that $\mathcal{G}(S)$ is closed under the multiplication it inherits from $R$, and it contains the multiplicative identity because $0,1 \in S$. Hence $\mathcal{G}(S)$ is an integral domain having $S$ as a subsemiring. 
\end{proof}

\begin{example} \label{ex:semiring not integral}
	Notice that the set $S := \{(0,0)\} \cup (\nn \times \nn)$ is a monoid with the usual component-wise addition, and it is closed under the usual component-wise multiplication with multiplication identity $(1,1)$. Hence $S$ is a semiring. Observe, on the other hand, that any extension of the multiplication of~$S$ to $\mathcal{G}(S) = \zz \times \zz$ making the latter a commutative ring must respect the identity $(1,0) (0,1) = (0,0)$ and, therefore, will not turn $\mathcal{G}(S)$ into an integral domain. Hence it follows from Lemma~\ref{lem:characterization of integral semirings} that $S$ is not a semidomain.
\end{example}

We say that $S$ is \emph{atomic} (resp., satisfies \emph{ACCP}) if its multiplicative monoid $S^*$ is atomic (resp., satisfies ACCP), while we say that $S$ is a \emph{BFS} (resp., \emph{FFS}, \emph{HFS}, \emph{LFS}, \emph{UFS}) provided that~$S^*$ is a BFM (resp., FFM, HFM, LFM, UFM). In addition, we call $S$ a \emph{GCD-semidomain} (resp., an \emph{MCD-semidomain}) if $S^*$ is a GCD-monoid (resp., an MCD-monoid). Note that when $S$ is an integral domain, we recover the usual definition of a UFD (as well as the definitions of a BFD, an FFD, and an HFD, which are now standard notions in atomicity and factorization theory). Although a semidomain $S$ can be embedded into an integral domain~$R$, the former may not inherit any atomic property from the latter as, after all, the integral domain~$\qq[x]$ is a UFD but it contains as a subring the integral domain $\zz + x\qq[x]$, which is not even atomic.

Let $R$ be an integral domain containing $S$ as a subsemiring. Then the semiring of polynomials $S[x]$ is a subsemiring of $R[x]$, and so $S[x]$ is also a semidomain. The elements of $S[x]$ are, in particular, polynomials in $R[x]$ and, as a result, all the standard terminology for polynomials can be applied to elements of $S[x]$, including \emph{degree}, \emph{order}, \emph{leading coefficient}, etc. Observe that $S^*$ is a divisor-closed submonoid of $S[x]^*$ and, therefore, $S[x]^\times = S^\times$ and $\mathcal{A}(S[x]) \cap S = \mathcal{A}(S)$. Following the terminology in~\cite{mR93}, we say that a nonzero polynomial in $S[x]$ is \emph{indecomposable} if it cannot be written as a product of two non-constant polynomials in $S[x]$. Similarly, the semiring of Laurent polynomials $S[x^{\pm 1}]$ over~$S$ is also a semidomain. Observe that the multiplicative set $\{sx^n \mid s \in S^* \text{ and } n \in \zz\}$ is actually a divisor-closed submonoid of $S[x^{\pm 1}]^*$ and, as a consequence,
\[
	S[x^{\pm 1}]^\times = \{sx^n \mid s \in S^\times \text{ and } n \in \zz \}.
\]

Following the terminology in~\cite{BCG21}, we say that a subsemiring of $\rr$ (under the standard addition and multiplication) is a \emph{positive semiring} if it consists of nonnegative numbers. The fact that underlying additive monoids of positive semirings are reduced makes them more tractable. The reader can check the recent paper~\cite{BCG21} for several examples of positive semirings illustrating several aspects of their atomic structure. The class of semidomains clearly contains those of integral domains and positive semirings. As the following example illustrates, integral domains and positive semirings account for all semidomains that can be embedded into~$\qq$.

\begin{example} \label{ex:semidomains of rationals}
	Suppose that $S$ is a semidomain that is a subsemiring of $\qq$, and assume that $S$ is not a positive semiring. Since $S$ is not a positive semiring, it must contain a negative rational. By virtue of \cite[Theorem~2.9]{rG84} any additive submonoid of $\qq$ containing both a negative rational and a positive rational must be a subgroup of $\qq$. As a result, the additive monoid of $S$ is a subgroup of~$\qq$. Thus, $S$ is a subring of~$\qq$, and so an integral domain.
\end{example}

In general, there are semidomains that are neither integral domains nor positive semirings.

\begin{example} \label{ex: integral semiring that is not an information algebra}
	Consider the semidomain $\nn_0[\pm \alpha, \beta]$, where $\alpha, \beta \in \rr_{>0}$ are algebraically independent elements over~$\qq$. Observe that the monoid $(\nn_0[\pm \alpha, \beta],+)$ is not reduced. On the other hand, since~$\alpha$ and~$\beta$ are algebraically independent elements over $\qq$, we see that $-\beta \not\in \nn_0[\pm \alpha, \beta]$ and, therefore, $(\nn_0[\pm \alpha, \beta],+)$ is not a group. Hence the semidomain $\nn_0[\pm \alpha, \beta]$ is neither an integral domain nor a positive semiring.
\end{example}

\bigskip
\section{Atomicity}
\label{sec:atomicity}

Back in 1990, Anderson, Anderson, and Zafrullah posed the question of whether the property of being atomic ascends from any integral domain to its polynomial ring \cite[Question~1]{AAZ90}. Three years later, Roitman provided a negative answer for that question, constructing in~\cite{mR93} examples of atomic domains whose polynomial rings are not atomic (in a parallel direction, examples of atomic monoids with non-atomic monoid domains were constructed by Coykendall and Gotti in~\cite{CG19}). Roitman also found a sufficient condition for the ascent of atomicity; this is \cite[Proposition~1.1]{mR93}. We proceed to generalize this last result to the context of semidomains.

\begin{theorem} \label{thm:atomicity transfers}
	For a semidomain $S$, the following statements are equivalent.
	\begin{enumerate}
		\item[(a)] $S$ is atomic and, for any indecomposable polynomial $\sum_{i=0}^n c_i x^i \in S[x]^*$, the set $\emph{\mcd}(c_0, \dots, c_n)$ is nonempty.
		\smallskip
		
		\item[(b)] $S[x]$ is atomic.
		\smallskip
		
		\item[(c)] $S[x^{\pm 1}]$ is atomic. 
	\end{enumerate}
\end{theorem}

\begin{proof}
	(a) $\Rightarrow$ (b): Assume, towards a contradiction, that $S[x]$ is not atomic. Let $f$ be a minimum-degree nonunit polynomial in $S[x]^*$ that does not factor into irreducibles. Then $f$ must be indecomposable and, as $S$ is atomic, $\deg f \ge 1$. By assumption, we can write $f = c g$, where $c$ is a maximal common divisor of the set of coefficients of~$f$. As any common divisor of the set of coefficients of $g$ belongs to $S^\times$, the fact that $g$ is indecomposable implies that $g$ is irreducible in~$S[x]$. This, along with the fact that $f$ does not factor into irreducibles in $S[x]$, guarantees that $c$ is a nonzero nonunit of $S$ that does not factor into irreducibles. However, this contradicts that $S$ is atomic.
	\smallskip
	
	(b) $\Rightarrow$ (c): We claim that $\mathcal{A}(S[x]) \subseteq \mathcal{A}(S[x^{\pm 1}]) \cup S[x^{\pm 1}]^\times$. To argue this, take $f \in \mathcal{A}(S[x])$, and assume that $f \notin S[x^{\pm 1}]^\times = \{ux^n \mid u \in S^\times \text{ and } n \in \zz\}$. Now write $f = gh$ for some $g,h \in S[x^{\pm 1}]$. After replacing $g$ and $h$ by some of their associates in $S[x^{\pm 1}]$, we can assume that $g,h \in S[x]$. Since $f \in \mathcal{A}(S[x])$, either $g$ or $h$ belongs to $S[x]^\times$ and, therefore, $S[x]^\times = S^\times \subseteq S[x^{\pm 1}]^\times$ implies that $f \in \mathcal{A}(S[x^{\pm 1}])$. Finally, it is clear that the fact that $S[x]$ is atomic, in tandem with the established inclusion $\mathcal{A}(S[x]) \subseteq \mathcal{A}(S[x^{\pm 1}]) \cup S[x^{\pm 1}]^\times$, ensures that $S[x^{\pm 1}]$ is atomic.
	\smallskip
	
	(c) $\Rightarrow$ (a): The multiplicative submonoid $M := \{sx^n \mid s \in S^* \text{ and } n \in \zz\}$ of $S[x^{\pm 1}]^*$ is atomic because it is a divisor-closed submonoid. Since $S^*_{\text{red}} \cong M_{\text{red}}$, we conclude that $S$ is atomic.
	
	Now suppose, for the sake of a contradiction, that there exists an indecomposable nonzero polynomial $f = \sum_{i=0}^n c_i x^i \in S[x]$ such that $\mcd(c_0, \ldots, c_n) = \emptyset$. As $f$ is indecomposable and $\mcd(c_0, \ldots, c_n)$ is empty, $\text{ord}\, f = 0$. In addition, the fact that $\mcd(c_0, \dots, c_n)$ is empty ensures that $\deg f \ge 1$. Hence~$f$ is a nonzero nonunit in $S[x^{\pm 1}]$, and so we can write $f = a_1 \cdots a_\ell$ for some $a_1, \dots, a_\ell \in \mathcal{A}(S[x^{\pm 1}])$. After replacing $a_1, \dots, a_\ell$ for some of their associates in $S[x^{\pm 1}]$, we can assume that they all belong to $S[x]$. Because $f$ is indecomposable, we can further assume that $\deg a_i = 0$ for every $i \in \ldb 1,\ell-1 \rdb$ and $\deg a_\ell = \deg f$. Since $a_\ell \in \mathcal{A}(S[x^{\pm 1}]) \cap S[x]$ and $\ord \, a_\ell = 0$, we see that $a_\ell \in \mathcal{A}(S[x])$, which implies that every common divisor of the set of coefficients of $a_\ell$ belongs to $S^\times$. Hence $a_1 \cdots a_{\ell-1}$ must belong to $\mcd(c_0, \dots, c_n)$, which is a contradiction.
\end{proof}

As a consequence of Theorem~\ref{thm:atomicity transfers}, we obtain that if a semidomain $S$ is an atomic MCD-semidomain, then both extensions $S[x]$ and $S[x^{\pm 1}]$ are atomic. Next we show that in this case $S[x]$ and $S[x^{\pm 1}]$ are also MCD-semidomains. 

\begin{prop} \label{prop: MCD transfers}
	For a semidomain $S$, the following statements are equivalent.
	\begin{enumerate}
		\item[(a)] $S$ is an MCD-semidomain.
		\smallskip
		
		\item[(b)] $S[x]$ is an MCD-semidomain.
		\smallskip
		
		\item[(c)] $S[x^{\pm 1}]$ is an MCD-semidomain.
	\end{enumerate}
\end{prop}

\begin{proof}
	(a) $\Rightarrow$ (b): Suppose that $S$ is an MCD-semidomain, and consider the set
	\[
		T = \big\{ (f_1, \ldots, f_n) \in S[x]^n \mid n \in \nn_{\ge 2} \quad \text{and} \quad \mcd(f_1, \ldots, f_n) = \emptyset \big\}.
	\]
	Assume, towards a contradiction, that $T$ is nonempty, and take $(g_1, \ldots, g_m) \in T$ such that $\sum_{i = 1}^m \deg \, g_i$ is as small as it can possibly be. Observe that $(g_1/g, \dots, g_m/g) \in T$ for any common divisor $g$ of $\{g_1, \dots, g_m\}$ in $S[x]$. For $f = \sum_{i=0}^n c_ix^i \in S[x]$, set $C_f \coloneqq \{c_0, \ldots, c_n\}$. Set $C := \bigcup_{i = 1}^m C_{g_i}$. As~$S$ is an MCD-semidomain, $\mcd(C)$ is nonempty. Take $d \in \mcd(C)$. Note that $d \mid_{S[x]} g_i$ for every $i \in \ldb 1,m \rdb$. Now suppose that $u \in S[x]$ is a common divisor of the set $\{ g_1/d, \ldots, g_m/d \}$. It follows from the minimality of $\sum_{i = 1}^m \deg \, g_i$ that $u \in S$. Therefore~$u$ is a common divisor of the set $C/d$ in $S$, and the fact that $d$ belongs to $\text{mcd}(C)$ guarantees that $u \in S^\times$. Hence $d$ is a maximal common divisor of $\{g_1, \dots, g_m\}$ in $S[x]$, contradicting that $(g_1, \dots, g_m) \in T$.
	\smallskip
	
	(b) $\Rightarrow$ (c): Assume that $S[x]$ is an MCD-semidomain, and fix $f_1, \dots, f_k \in S[x^{\pm 1}]$.  Because $S[x]$ is an MCD-semidomain, we can pick $d \in \mcd_{S[x]}(g_1, \dots, g_k)$, where $g_i := x^{-\text{ord} \, f_i} f_i$ for every $i \in \ldb 1,k \rdb$. Note that $d$ is a common divisor of $\{f_1, \dots, f_k\}$ in $S[x^{\pm 1}]$. Let $f$ be a common divisor of $\{f_1/d, \dots, f_k/d\}$ in $S[x^{\pm 1}]$. Because the polynomial $g := x^{-\text{ord} f} f \in S[x]$ divides $g_i/d$ in $S[x]$ for every $i \in \ldb 1,k \rdb$, the fact that $d \in \mcd_{S[x]}(g_1, \dots, g_k)$ implies that $g \in S[x]^\times = S^\times$. Hence $f \in S[x^{\pm}]^\times$, and so $d$ is a maximal common divisor of $\{f_1, \dots, f_k\}$ in $S[x^{\pm 1}]$. Thus, $S[x^{\pm 1}]$ is an MCD-semidomain.
	\smallskip
	
	(c) $\Rightarrow$ (a): Suppose that $S[x^{\pm 1}]$ is an MCD-semidomain. Since $M = \{sx^n \mid s \in S^* \text{ and } n \in \zz\}$ is a divisor-closed submonoid of $S[x^{\pm 1}]^*$ satisfying that $\mathcal{U}(M) = S[x^{\pm 1}]^{\times}$, we see that $M$ is an MCD-monoid. This immediately implies that $S$ is an MCD-semidomain because $M$ and $S^*$ have isomorphic reduced monoids.
\end{proof}

Let $S$ be a semidomain, and consider the Grothendieck group $\mathcal{G}(S)$ of $(S,+)$ as an integral domain having $S$ as a subsemiring (see Lemma~\ref{lem:characterization of integral semirings}). The subset of the ring of formal power series $\mathcal{G}(S)\ldb x \rdb$ consisting of those power series with coefficients in $S$ is a semidomain, which we call the \emph{semidomain of formal power series over} $S$ and denote by $S\ldb x \rdb$. In \cite{mR2000}, Roitman proved that atomicity does not ascend from an integral domain to its ring of formal power series. Next we show that the semidomain of formal power series over $S$ is not necessarily atomic, even when $S$ is taken to be the UFS $\nn_0$.

\begin{example}  \label{ex:power series extensions}
	Let us argue that $\nn_0\ldb x \rdb$ is not atomic. It is clear that $\nn_0\ldb x \rdb$ is reduced. Suppose, towards a contradiction, that $\nn_0\ldb x \rdb$ is atomic. Set $f := \sum_{i=0}^{\infty} x^i$. Observe that we can write $f = gh$, where either $g \coloneqq \sum_{i = 0}^{\infty} c_i x^i$ or $h \coloneqq \sum_{i = 0}^{\infty} d_i x^i$ is an irreducible of $\nn_0\ldb x \rdb$ that is not a polynomial. It is not hard to verify that neither $g$ nor $h$ is equal to $(1 - x^k)^{-1}$ for any $k \in \nn$. Clearly, the inequality $c_i + d_i \leq 1$ holds for every $i \in \nn$. We can assume, without loss of generality, that $c_0 = \cdots = c_t = c_{(k + 1)(t + 1)}= 1$ and $d_0 = d_{t + 1} = \cdots = d_{k(t + 1)} = 1$ for some $t, k \in \nn$ (note that, implicitly, we are also assuming that $c_{j} = 0$ for every $j \in \llbracket t + 1,k(t + 1) + t \rrbracket$ and $d_s = 0$ for every $s \in \{n \in \nn \mid n < k(t + 1) \text{ and } t + 1 \nmid n\}$). Set
	\[
		C = \Big\{ n \in \nn : t + 1 \nmid n \text{ and } d_n = 1 \Big\} \bigcup \Big\{ n \in \nn : t + 1 \mid n,\, c_n = 1, \text{ and } c_{n + j} = 0 \text{ for some } j \in \llbracket 1,t \rrbracket \Big\}.
	\]
	
	\noindent \textit{Claim 1:} $C$ is empty.
	\smallskip
	
	\noindent \textit{Proof of Claim 1:} Suppose, by way of contradiction, that $C$ is nonempty. Let $m$ be the minimal element of $C$. If $t + 1 \nmid m$, then $m = k'(t + 1) + j$ for some $j \in \llbracket 1,t \rrbracket$ and $k' \in \nn_{\geq k}$. It is easy to see that the term $x^{k'(t + 1)}$ does not show up in $h$ and, by the minimality of $m$, this term does not show up in $g$ either. As a result, there exist $r,\ell \in (t + 1)\nn$ such that $r + \ell = k'(t + 1)$, $c_\ell = 1$, $d_r = 1$, and $\max(r,\ell) < k'(t + 1)$. The minimality of $m$ now implies that $c_{\ell + j} = 1$, which contradicts that $d_m = 1$. Hence $m = k'(t + 1)$ for some $k' \in \nn_{\geq k + 1}$, which implies the existence of $j \in \llbracket 1,t \rrbracket$ with $c_{m + j} = 0$. Without loss of generality, we can assume that $c_{m + j - i} = 1$ for every $i \in \llbracket 1,j \rrbracket$. It is easy to verify that $d_{m + j} = 0$. Consequently, there exist $r,\ell \in \nn_{< m}$ with $r + \ell = m + j$, $c_\ell = 1$, and $d_r = 1$. The minimality of $m$ ensures that $t + 1 \mid r$ which, in turn, implies that $\ell = k''(t + 1) + j$ for some $k'' \in \nn$. Since $c_m = 1$, both equalities $c_{k''(t + 1)} = 0$ and $d_{k''(t + 1)} = 0$ hold. Again, there exist $r', \ell' \in \nn_{< k''(t + 1)}$ with $r' + \ell' = k''(t + 1)$, $c_{\ell'} = 1$, and $d_{r'} = 1$. The minimality of $m$ guarantees that $t + 1 \mid r'$, which implies that $c_{\ell' + j} = 1$, but this contradicts that $c_\ell = 1$. Thus, Claim~1 follows.
	\smallskip
	
	Since~$C$ is empty, if $c_{n(t + 1)} = 1$ for some $n \in \nn$, then $c_{n(t + 1) + j} = 1$ for all $j \in \llbracket 1,t \rrbracket$. We proceed to argue the following claim.
	\smallskip
	
	\noindent \textit{Claim 2:} If $c_{n(t + 1)} = 0$ for some $n \in \nn$, then $c_{n(t + 1) + j} = 0$ for all $j \in \llbracket 0,t \rrbracket$.
	\smallskip
	
	\noindent \textit{Proof of Claim 2:} Suppose towards a contradiction that $c_{n(t + 1)} = 0$ for some $n \in \nn$ and $c_{n(t + 1) + j} = 1$ for some $j \in \llbracket 1,t \rrbracket$. It is not hard to see that $d_{n(t + 1)} = 0$, which implies that there exist $r,\ell \in \nn_{<n(t + 1)}$ such that $r + \ell = n(t + 1)$, $c_\ell = 1$, and $d_r = 1$. Since $C$ is empty, we see that $t + 1 \mid r$ (and, consequently, $t + 1 \mid \ell$). Then $c_{\ell + j} = 1$. However, this contradicts the fact that $c_{n(t + 1) + j} = 1$, and so Claim~2 is established.
	\smallskip
	
	Therefore it follows from Claim~2 that $1 + x +\dots + x^t$ divides $g$ in $\nn_0\ldb x \rdb$. Assume now that, for some $m \in \nn_0$, we can write 
	\[
		h = \sum_{i = 0}^{m} x^{r_i(t + 1)} \sum_{i=0}^k x^{i(t+1)} + \sum_{i = n}^{\infty} d_{i(t + 1)}x^{i(t + 1)},
	\]
	where $r_0 = 0$, $r_{i + 1} > r_i + k$ for $i \in \llbracket 0, m - 1 \rrbracket$, and $n > r_m + k$. Now assume, without loss of generality, that $d_{n(t + 1)} = 1$. 
	\smallskip
	
	\noindent \textit{Claim 3:} $d_{(n + j)(t + 1)} = 1$ for every $j \in \llbracket 0,k \rrbracket$. 
	\smallskip
	
	\noindent \textit{Proof of Claim 3:} Suppose, towards a contradiction, that there exists $j \in \llbracket 1,k \rrbracket$ such that $d_{(n + j)(t + 1)} = 0$, and assume that $j$ is minimal. Since $c_{(k + 1)(t + 1)} = d_{k(t + 1)} = 1$ and $n > k$, we have $c_{(n + j)(t + 1)} = 0$. Consequently, there exist $r,\ell \in \nn$ such that $r + \ell = (n + j)(t + 1)$, $c_r = 1$, and $d_\ell = 1$. Clearly, we can write $\ell = (r_u + v)(t + 1)$, where $u \in \llbracket 0,m \rrbracket$ and $v \in \llbracket 0,k \rrbracket$. Observe that $v < j$; otherwise $d_{(r_u + v - j)(t + 1)} = 1$, which implies that $d_{n(t + 1)} = 0$. Thus,
	\begin{equation} \label{eq: different factors same monomial}
		x^{n(t + 1)} \cdot x^{(k + 1)(t + 1)} = x^{(r_u + v + k - j + 1)(t + 1)}\cdot x^{r},
	\end{equation}
	where $1 \leq v + k - j + 1 \leq k$. Since $n > r_m + k \ge r_u + (v + k - j + 1)$, the equality~\eqref{eq: different factors same monomial} generates a contradiction. Hence Claim~3 follows.
	\smallskip

	By induction, we obtain that $1 + x^{t + 1} + \cdots + x^{k(t + 1)}$ divides $h$ in $\nn_0\ldb x \rdb$. We have, therefore, argued that both $g$ and $h$ are divisible by some non-constant polynomials in $\nn_0\ldb x \rdb$, which contradicts the assumption made on $g$ and $h$ at the beginning of the proof. Hence $\nn_0\ldb x \rdb$ is not atomic.
\end{example}

\begin{remark}
	As $\nn_0\ldb x \rdb$ is not atomic even though $\nn_0$ is a UFS, most factorization properties do not ascend, in general, from a semidomain to its semidomain of formal power series. In particular, the statement of Theorem~\ref{thm:ACCP transfers} is not longer true after replacing either $S[x]$ or $S[x^{\pm 1}]$ by $S\ldb x \rdb$.
\end{remark}

We conclude this section with a related question that the authors were unable to answer while working on this paper. Following the terminology in~\cite{nL19}, we say that a semidomain $S$ is \emph{nearly atomic} provided that there exists $s \in S$ such that $st$ factors into atoms for each nonunit $t \in S^*$.

\begin{question}
	Is the semidomain $\nn_0\ldb x \rdb$ nearly atomic?
\end{question}

\bigskip
\section{Ascending Chains of Principal Ideals}
\label{sec:ACCP}

Unlike the property of being atomic, it is well known that the property of satisfying ACCP ascends from each integral domain to its ring of (Laurent) polynomials (this is not the case in the more general context of commutative rings with identity, as shown by Heinzer and Lantz in~\cite{HL94}). In the next theorem, we generalize this result to the context of semidomains. If $S$ is a semidomain and $f \in S[x]^*$, then we let $c(f)$ denote the leading coefficient of $f$.

\begin{theorem} \label{thm:ACCP transfers}
	For a semidomain $S$, the following statements are equivalent.
	\begin{enumerate}
		\item[(a)] $S$ satisfies ACCP.
		\smallskip
		
		\item[(b)] $S[x]$ satisfies ACCP.
		\smallskip
		
		\item[(c)] $S[x^{\pm 1}]$ satisfies ACCP.
	\end{enumerate}
\end{theorem}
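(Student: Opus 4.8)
The plan is to prove the cyclic chain of implications $(a) \Rightarrow (b) \Rightarrow (c) \Rightarrow (a)$, with the real work living in $(a) \Rightarrow (b)$. For the easy implications, note that $S$ embeds as a divisor-closed submonoid of both $S[x]^*$ and $S[x^{\pm 1}]^*$ (as already recorded in the preliminaries), and ACCP trivially descends to divisor-closed submonoids since a strictly ascending chain of principal ideals in the submonoid lifts to one in the larger monoid. So $(b) \Rightarrow (a)$ and $(c) \Rightarrow (a)$ are immediate. For $(b) \Rightarrow (c)$, the key observation is that every Laurent polynomial is associate (via multiplication by a unit $x^n$) to an ordinary polynomial of order $0$, and that the map $f \mapsto$ its ``de-shifted'' polynomial respects divisibility up to units; thus an ascending chain of principal ideals in $S[x^{\pm 1}]^*$ can be translated into one in $S[x]^*$, so ACCP ascends from $S[x]$ to $S[x^{\pm 1}]$.

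The heart of the argument is $(a) \Rightarrow (b)$. I would argue by contradiction: suppose $S[x]$ fails ACCP, so there is a strictly ascending chain of principal ideals $(f_1) \subsetneq (f_2) \subsetneq \cdots$ in $S[x]^*$. Writing $f_{n+1} \mid_{S[x]} f_n$ with non-unit cofactors, I want to extract a contradiction from two monotone quantities: the \emph{degrees} $\deg f_n$ and some invariant measuring the ``size'' of the leading (or trailing) coefficients. Since $f_{n+1} \mid_S$-divides $f_n$, we get $\deg f_{n+1} \le \deg f_n$, so the degree sequence is eventually constant. Once degrees stabilize, each cofactor is a constant $c_n \in S^*$, and the leading coefficients satisfy $c(f_n) = c_n \, c(f_{n+1})$, i.e. $c(f_{n+1}) \mid_S c(f_n)$ in $S^*$. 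Because $S$ satisfies ACCP, the chain of principal ideals $(c(f_n))$ in $S^*$ stabilizes, forcing the $c_n$ to be units from some point on — so the tail of the cofactors consists of units, contradicting that each inclusion $(f_n) \subsetneq (f_{n+1})$ is strict.

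The main obstacle I anticipate is handling the \emph{passage from degree-stabilization to a single controllable coefficient}, since a priori the cofactor $f_{n+1}/f_n$ (abusing notation) need not be constant even when degrees agree only eventually, and more subtly the coefficient bookkeeping must be done in the monoid $S^*$ where divisibility, not subtraction, is the operative relation. I would address this by first passing to the tail where $\deg f_n = d$ is constant, then observing that a degree-preserving divisor in $S[x]^*$ of a degree-$d$ polynomial must be a nonzero constant (comparing degrees in the embedding integral domain $R \supseteq S$), which legitimizes the reduction to leading-coefficient divisibility. One should double-check that ``nonunit cofactor in $S[x]$'' indeed forces ``nonunit constant in $S$'' at the coefficient level, using $S[x]^\times = S^\times$ from the preliminaries; this is where the semidomain structure (rather than mere semiring structure) is used, since it guarantees the ambient integral domain in which degree arithmetic behaves as expected.

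Alternatively, and perhaps more cleanly, one can combine degree and leading coefficient into a single well-founded descent: to a non-unit $f \in S[x]^*$ assign the pair $(\deg f, [c(f)])$ where the second entry lives in the divisibility poset of $S^*_{\mathrm{red}}$; a strict inclusion $(f_n) \subsetneq (f_{n+1})$ strictly decreases this pair under the lexicographic order built from $\le$ on $\nn_0$ and the (well-founded, by ACCP in $S$) divisibility order on $S^*_{\mathrm{red}}$. Establishing that this lexicographic order is well-founded from the ACCP hypothesis on $S$, and that each strict step genuinely decreases it, is the crux; once that is in place the chain cannot be infinite and ACCP for $S[x]$ follows.
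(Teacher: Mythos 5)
Your proposal is correct and follows essentially the same route as the paper: the cycle (a) $\Rightarrow$ (b) $\Rightarrow$ (c) $\Rightarrow$ (a), with (a) $\Rightarrow$ (b) proved by first stabilizing degrees and then applying ACCP in $S$ to the ascending chain of leading-coefficient ideals (using cancellation in $S^*$, valid since $S$ embeds in an integral domain, to conclude the constant cofactors are units), and (b) $\Rightarrow$ (c) by de-shifting each Laurent polynomial to an associate order-zero polynomial. One small repair: $S^*$ is \emph{not} divisor-closed in $S[x^{\pm 1}]^*$ (for a nonunit $s \in S^*$ one has $s = (sx)x^{-1}$, so $sx \mid s$ there), so for (c) $\Rightarrow$ (a) you must, as the paper and its preliminaries do, pass through the divisor-closed submonoid $M = \{sx^n \mid s \in S^* \text{ and } n \in \zz\}$ and use that $M_{\red} \cong S^*_{\red}$; with that one-line fix your argument coincides with the paper's.
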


\begin{proof}
	(a) $\Rightarrow$ (b): Let $(f_n S[x])_{n \in \nn}$ be an ascending chain of principal ideals in the semidomain $S[x]$. Since $\deg f_n \ge \deg f_{n+1}$ for every $n \in \nn$, we can choose $N_1 \in \nn$ with $\deg f_n = \deg f_{N_1}$ for every $n \ge N_1$. On the other hand, for each $n \in \nn$, the fact that $f_{n+1}$ divides $f_n$ in $S[x]$ implies that $c(f_{n + 1})$ divides $c({f_{n}})$ in $S$. Therefore $(c(f_n) S)_{n \in \nn}$ is an ascending chain of principal ideals in $S$. Since $S$ satisfies ACCP, there exists $N_2 \in \nn$ such that $c(f_n)$ and $c(f_{N_2})$ are associates in $S^*$ for every $n \geq N_2$. After setting $N := \max\{N_1, N_2\}$, we can take a sequence $(s_n)_{n \in \nn}$ with terms in $S^*$ and a sequence $(u_n)_{n \in \nn}$ with terms in $S^\times$ such that $f_n = s_n f_N$ and $c(f_n) = u_n c(f_N)$ for every $n \ge N$. Thus, for each $n \in \nn$ with $n \ge N$,
	\[	
		u_n c(f_N) = c(f_n) = c(s_n f_N) = s_n c(f_N),
	\]
	which implies that $s_n \in S^\times$.  As a result, $f_n S[x] = f_N S[x]$ for every $n \ge N$, and so the ascending chain of principal ideals $(f_n S[x])_{n \in \nn}$ stabilizes. Hence $S[x]$ satisfies ACCP.
	\smallskip
	
	(b) $\Rightarrow$ (c): Now assume that $S[x]$ satisfies ACCP. Let $(x^{k_n}f_n S[x^{\pm 1}])_{n \in \nn}$ be an ascending chain of principal ideals of $S[x^{\pm 1}]$, where $(k_n)_{n \in \nn}$ is a sequence of integers and $f_n \in S[x]$ satisfies $\text{ord} \, f_n = 0$ for every $n \in \nn$. Take a sequence $(\ell_n)_{n \in \nn}$ of integers and a sequence $(g_n)_{n \in \nn}$ with terms in $S[x]$ satisfying $\text{ord} \, g_n = 0$ and $x^{k_n}f_n = (x^{k_{n+1}} f_{n+1})(x^{\ell_{n+1}} g_{n+1})$ for every $n \in \nn$. As $f_n = f_{n+1} g_{n+1}$ for each $n \in \nn$, we see that $(f_n S[x])_{n \in \nn}$ is an ascending chain of principal ideals in $S[x]$. Since $S[x]$ satisfies ACCP, $(f_n S[x])_{n \in \nn}$ must stabilize, and so there is an $N \in \nn$ such that $f_n$ and $f_{n+1}$ are associates in $S[x]$ for every $n \ge N$. This implies that $g_{n+1} = f_n/f_{n+1} \in S[x]^\times \subseteq S[x^{\pm 1}]^\times$ for every $n \ge N$. Thus, $x^{k_n}f_n$ and $x^{k_{n+1}} f_{n+1}$ are associates in $S[x^{\pm 1}]$ for every $n \ge N$, and so $(x^{k_n}f_n S[x^{\pm 1}])_{n \in \nn}$ stabilizes. Hence $S[x^{\pm 1}]$ satisfies ACCP.
	\smallskip
	
	(c) $\Rightarrow$ (a): Suppose that $S[x^{\pm 1}]$ satisfies ACCP. Since $M = \{sx^n \mid s \in S^* \text{ and } n \in \zz\}$ is a divisor-closed submonoid of $S[x^{\pm 1}]^*$, the former also satisfies ACCP. This immediately implies that $S$ satisfies ACCP because $M$ and $S^*$ have isomorphic reduced monoids.
\end{proof}

It is well known that every monoid satisfying ACCP is atomic, and so the same statement holds for semidomains. As we have mentioned in the introduction, the converse does not hold even in the class of integral domains. We will construct in Example~\ref{ex:atomic positive semiring without ACCP} an atomic positive semiring that does not satisfy ACCP. First, we need the following lemma.

\begin{lemma} \label{lem:molecules of S_r}
	For $r \in \qq \cap (0,1)$ with $\mathsf{n}(r) \ge 2$, consider the additive monoid $S_r \coloneqq \langle r^n \mid n \in \nn_0 \rangle$.  Take $x \in S_r^*$, and write $x = \sum^n_{i=0} c_ir^i$ for coefficients $c_0, \ldots, c_n \in \mathbb{N}_0$. If $c_i < \mathsf{n}(r)$ for every $i \in \llbracket 0,n \rrbracket$, then $|\mathsf{Z}(x)| = 1$.
\end{lemma}

\begin{proof}
	Set $z \coloneqq \sum_{i = 0}^n c_ir^i \in \mathsf{Z}(x)$. Suppose towards a contradiction that there exists a factorization $z' \in \mathsf{Z}(x)$ such that $z' \neq z$. It follows from \cite[Lemma 3.1(1)]{CGG20} that $z$ is the factorization of minimum length of~$x$. Hence there exists a sequence of factorizations $z_1, \ldots,z_t \in \mathsf{Z}(x)$ with $z_1 = z'$ and $z_t = z$ such that for each $j \in \ldb 1, t-1 \rdb$, the factorization $z_{j+1}$ can be obtained from $z_j$ by applying the identity $\mathsf{d}(r)r^{k+1} = \mathsf{n}(r)r^k$ (see \cite[Remark~3.4]{hP23}). However, this would imply that, for some $i \in \llbracket 0,n \rrbracket$, the inequality $c_i \geq \mathsf{n}(r)$ holds, which is a contradiction. Hence $|\mathsf{Z}(x)| = 1$.
\end{proof}

\begin{example} \label{ex:atomic positive semiring without ACCP}
	Take $r \in \qq \cap (0,1)$ with $\mathsf{n}(r) \ge 2$, and consider the additive monoid $S_r \coloneqq \langle r^n \mid n \in \nn_0 \rangle$. By \cite[Corollary~4.4]{CGG20}, the monoid $S_r$ is atomic and does not satisfy ACCP. We proceed to argue that $S_r$ is an MCD-monoid. Take $s_1, \ldots, s_k \in S_r$ for some $k \in \nn$. For each $i \in \llbracket 1,k \rrbracket$, we can write $s_i = \sum_{j = 0}^{n_i} c_{i,j} r^j$, where $c_{i,j} \in \nn_0$ for all $i \in \llbracket 1,k \rrbracket$ and $j \in \llbracket 0, n_i \rrbracket$. By virtue of the identity $\mathsf{n}(r)r^n = \mathsf{d}(r)r^{n + 1}$, there is no loss of generality in assuming that $c_{i,j} < \mathsf{n}(r)$ for all $i \in \llbracket 1,k \rrbracket$ and $j \in \llbracket 0, n_i - 1 \rrbracket$. In addition, the same identity allows us to assume that $n_1 = \cdots = n_k$.  Set $d \coloneqq (\min_{1 \leq i \leq k} c_{i,{n_1}})r^{n_1}$. Clearly, $d$ is a common divisor of $s_1, \ldots, s_k$ in $S_r$. Observe that, for some $i \in \llbracket 1,k \rrbracket$, the equality $s_i - d = \sum_{j = 0}^{n_1 - 1} c_{i,j} r^j$ holds, where $c_{i,j} < \mathsf{n}(r)$ for each $j \in \llbracket 0, n_1 - 1 \rrbracket$. It follows now from Lemma~\ref{lem:molecules of S_r} that the element $s_i - d$ has finitely many nonzero divisors in~$S_r$, which implies that $\mcd(s_1, \ldots, s_k)$ is nonempty. Hence $S_r$ is an MCD-monoid.
	
	Let us now consider the additive monoid $E(S_r) \coloneqq \langle e^s \mid s \in S_r \rangle$, where $e$ is the Euler number. It follows from Lindemann-Weierstrass Theorem that $E(S_r)$ is the free monoid on the set $M := \{e^s \mid s \in S_r\}$. Note that $E(S_r)$ is closed under multiplication and, consequently, it is a positive semiring (cf. \cite[Example~4.15]{BG20}). Observe that $\min E(S_r)^* = 1$ and, therefore, $E(S_r)^\times = \{1\}$. Since the multiplicative submonoid $M$ of $E(S_r)^*$ is isomorphic to the additive monoid $S_r$, the monoid $M$ does not satisfy ACCP which, in turn, implies that $E(S_r)$ does not satisfy ACCP (the property of satisfying ACCP transfers from a reduced monoid to all its submonoids). To argue that the positive semiring $E(S_r)$ is atomic, take a nonzero nonunit element $f := c_1 e^{s_1} + \cdots + c_k e^{s_k} \in E(S_r)$, where $c_1, \dots, c_k \in \nn$ and $s_1, \ldots, s_k \in S_r$. After taking $s \in \mcd(s_1, \ldots, s_k)$ and letting $d$ be the greatest common divisor of $c_1, \ldots, c_k$ in $\nn$, we can write
	\[
		f = d e^s \bigg( \frac{c_1}d e^{s_1 - s} + \cdots + \frac{c_k}d e^{s_k - s} \bigg),
	\]
	where $0$ is the only common divisor of $s_1 - s, \ldots, s_k - s$ in $S_r$. As a consequence of Lindemann-Weierstrass Theorem, both $\nn$ and $M$ are divisor-closed submonoids of the multiplicative monoid $E(S_r)^*$ and, therefore, both $\mathbb{P}$ and $\{e^a \mid a \in \mathcal{A}(S_r)\}$ are contained in $\mathcal{A}(E(S_r))$. Thus, the fact that $\nn$ and $S_r$ are atomic immediately implies that $de^s$ factors into irreducibles in $E(S_r)$. Set $g \coloneqq  \frac{c_1}d e^{s_1 - s} + \cdots + \frac{c_k}d e^{s_k - s}$, and write $g = f_1 \cdots f_m$ for some nonunit elements $f_1, \dots, f_m \in E(S_r)$. As $0$ is the only common divisor of $s_1 - s, \ldots, s_k - s$ in $S_r$, no element of the form $e^t$ with $t \in S_r$ divides~$g$ in $E(S_r)$ and, therefore, $f_i \ge 2$ for any $i \in \ldb 1,m \rdb$. Then $m \leq \log_2(f_1 \cdots f_m) = \log_2 \big(  \frac{c_1}d e^{s_1 - s} + \cdots + \frac{c_k}d e^{s_k - s} \big)$. Hence, after assuming that $m$ is as large as it can possibly be, we obtain that $f_1 \cdots f_m$ is a factorization of $g$ in $E(S_r)$, and so~$f$ factors into irreducibles. Thus, $E(S_r)$ is atomic.
\end{example}

We take another look at the semidomain $\nn_0\ldb x \rdb$, now from the ACCP perspective.

\begin{example}
	We have already argued in Example~\ref{ex:power series extensions} that the semidomain $\nn_0\ldb x \rdb$ is not atomic. Thus, it cannot satisfy ACCP. Let us identify an ascending chain of principal ideals that does not stabilize. For each $k \in \nn_0$, set $f_k \coloneqq \sum_{n = 0}^{\infty} x^{n\cdot 2^k}$. Observe that $f_k = (1 + x^{2^k})f_{k+1}$ for every $k \in \nn_0$, which implies that $(f_k \nn_0\ldb x \rdb)_{k \in \nn_0}$ is an ascending chain of principal ideals of $\nn_0\ldb x \rdb$. Since $\nn_0 \ldb x \rdb$ is reduced, the inclusion $f_k \nn_0\ldb x \rdb \subseteq f_{k+1} \nn_0 \ldb x \rdb$ is proper for every $k \in \nn_0$. Consequently, the ascending chain of principal ideals $(f_k \nn_0\ldb x \rdb)_{k \in \nn_0}$ does not stabilize.
\end{example}

The main results we have established in this section are illustrated in the diagram of Figure~\ref{fig:atomic/ACCP transfers}.

\begin{figure}[h]
	\begin{tikzcd}[cramped]
		S \textbf{ \, satisfies ACCP } \  \arrow[r, Rightarrow]	\arrow[red, r, Leftarrow, "/"{anchor=center,sloped}, shift left=1.7ex] \arrow[d, Leftrightarrow] & \ S \textbf{ \, is atomic } \arrow[d, Leftarrow, shift right=1ex] \arrow[red, d, Rightarrow, "/"{anchor=center,sloped}, shift left=1ex]  \\ 
		S[x] \textbf{ satisfies ACCP } \ \arrow[d, Leftrightarrow] \arrow[r, Rightarrow]	\arrow[red, r, Leftarrow, "/"{anchor=center,sloped}, shift left=1.7ex] & \ S[x] \textbf{ is atomic } \arrow[d, Leftrightarrow] \\
		S[x^{\pm 1}] \textbf{ satisfies ACCP } \ \arrow[r, Rightarrow] \arrow[red, r, Leftarrow, "/"{anchor=center,sloped}, shift left=1.7ex] & \ S[x^{\pm 1}] \textbf{ is atomic } 
	\end{tikzcd}
	\caption{As proved in Theorems~\ref{thm:atomicity transfers} and~\ref{thm:ACCP transfers} the vertical implications in the above diagram hold for every semidomain $S$. Grams' construction in~\cite[Section~1]{aG74} and Roitman's examples in~\cite[Section~5]{mR93} confirm that neither the horizontal implications nor the top-right implication are reversible, which is illustrated by red marked arrows.}
	\label{fig:atomic/ACCP transfers}
\end{figure}
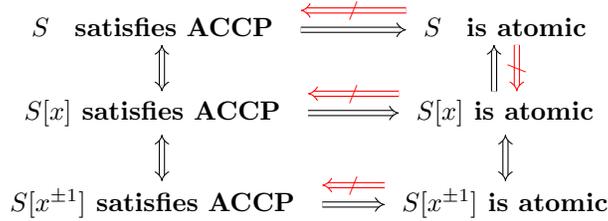

\bigskip
\section{The Bounded and Finite Factorization Properties}
\label{sec:bounded and finite factorization properties}

In this section, we study the bounded and the finite factorization properties. Specifically, we prove that both properties ascend from a semidomain $S$ to the semidomains $S[x]$ and $S[x^{\pm 1}]$. Let us start with a useful characterization of BFMs.

\begin{definition}
	Given a monoid $M$, a function $\ell\colon M \to \nn_0$ is a \emph{length function} of $M$ if it satisfies the following two properties:
	\begin{enumerate}
		\item[(i)] $\ell(u) = 0$ if and only if $u \in \mathcal{U}(M)$;
		\smallskip
		
		\item[(ii)] $\ell(bc) \geq \ell(b) + \ell(c)$ for every $b,c \in M$.
	\end{enumerate}
\end{definition} 

The following result is well known.

\begin{prop}\cite[Theorem~1]{fHK92}
	A monoid $M$ is a BFM if and only if there is a length function $\ell\colon M \to \nn_0$.
\end{prop}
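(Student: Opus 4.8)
The plan is to prove the two implications separately, in each case exhibiting the explicit object whose existence is asserted, and to keep in mind that the definition of a BFM presupposes atomicity (so that $\mathsf{L}(b)$ is defined).

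For the direction asserting that a length function forces the bounded factorization property, I would first argue that the mere existence of $\ell$ already guarantees that $M$ is atomic, and only afterwards bound the lengths of all factorizations. For atomicity I would induct on $\ell(b)$: if $b$ is a non-unit that fails to be an atom, write $b = cd$ with $c, d \notin \mathcal{U}(M)$, so that property (i) gives $\ell(c), \ell(d) \geq 1$ and property (ii) gives $\ell(b) \geq \ell(c) + \ell(d)$, forcing both $\ell(c)$ and $\ell(d)$ to be strictly smaller than $\ell(b)$; since $\ell$ takes values in $\nn_0$, this descent terminates and produces a factorization of $b$ into atoms. Once atomicity is in hand, for any factorization $b = a_1 \cdots a_n$ into atoms, repeated use of property (ii) together with $\ell(a_i) \geq 1$ yields $n \leq \ell(b)$, so that $\mathsf{L}(b) \subseteq \llbracket 0, \ell(b) \rrbracket$ is finite and $M$ is a BFM.

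For the converse I would simply set $\ell(b) := \max \mathsf{L}(b)$ for each $b \in M$; this is well defined because a BFM is in particular atomic (so $\mathsf{L}(b)$ is nonempty, with $\mathsf{L}(u) = \{0\}$ when $u$ is a unit) and $\mathsf{L}(b)$ is a finite subset of $\nn_0$ by hypothesis. Property (i) then follows by observing that a non-unit admits a factorization of length at least $1$ while a unit has only the empty factorization. For property (ii), I would take factorizations of $b$ and of $c$ realizing the lengths $\ell(b)$ and $\ell(c)$; concatenating them gives a factorization of $bc$ of length $\ell(b) + \ell(c)$, so that $\ell(b) + \ell(c) \in \mathsf{L}(bc)$ and hence $\ell(bc) \geq \ell(b) + \ell(c)$.

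The only genuinely delicate point is the atomicity argument in the first implication: one must recognize that a length function is precisely a well-founded rank on $M$, so that the failure of some non-unit to factor into atoms would produce an infinite strictly decreasing sequence of values in $\nn_0$, which is impossible. Everything else reduces to a direct verification of the two defining properties of a length function.
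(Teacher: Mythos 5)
Your proof is correct, and since the paper states this proposition without proof (citing \cite[Theorem~1]{fHK92}), there is nothing to diverge from: your argument is exactly the standard one behind that citation, namely induction on $\ell(b)$ for atomicity plus the bound $|z| \le \ell(b)$ in one direction, and $\ell(b) := \max \mathsf{L}(b)$ with concatenation of maximal-length factorizations in the other. The only step you pass over silently is that $\ell$ is constant on associate classes, which is needed because a factorization of $b$ is by definition a preimage of $b \, \mathcal{U}(M)$ in $M_{\red}$, so the atoms multiply to $bu$ for some unit $u$ rather than to $b$ itself; this follows immediately from property (ii) applied twice, since $\ell(bu) \ge \ell(b) + \ell(u) = \ell(b)$ and $\ell(b) = \ell((bu)u^{-1}) \ge \ell(bu)$.
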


We are now in a position to discuss the results of this section.

\begin{theorem} \label{thm:BF transfers}
	For a semidomain $S$, the following statements are equivalent.
	\begin{enumerate}
		\item[(a)] $S$ is a BFS.
		\smallskip
		
		\item[(b)] $S[x]$ is a BFS.
		\smallskip
		
		\item[(c)] $S[x^{\pm 1}]$ is a BFS.
	\end{enumerate}
\end{theorem}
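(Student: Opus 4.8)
The plan is to rely on the length-function characterization of bounded factorization monoids just recalled (the Proposition of Halter--Koch): a monoid is a BFM precisely when it admits a length function. Each implication then amounts to transporting a length function between the multiplicative monoids $S^*$, $S[x]^*$, and $S[x^{\pm 1}]^*$, exploiting the elementary facts that, inside the ambient integral domain, degrees and orders of polynomials add while leading coefficients multiply.

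For (a) $\Rightarrow$ (b), I would start from a length function $\ell \colon S^* \to \nn_0$ and define $L \colon S[x]^* \to \nn_0$ by $L(f) := \deg f + \ell(c(f))$, where $c(f)$ is the leading coefficient. Property (ii) is immediate: since $\deg(fg) = \deg f + \deg g$ and $c(fg) = c(f)c(g)$, the superadditivity of $\ell$ yields $L(fg) \ge L(f) + L(g)$. For property (i), observe that $L(f) = 0$ forces $\deg f = 0$ and $\ell(f) = 0$, whence $f \in \mathcal{U}(S^*) = S^\times = S[x]^\times$, and conversely every unit has $L$-value $0$. The degree term is exactly what detects nonconstant polynomials, while the coefficient term $\ell(c(f))$ is what detects nonunit constants (which have degree $0$); together they vanish precisely on $S^\times$.

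For (b) $\Rightarrow$ (c), I would use that every $h \in S[x^{\pm 1}]^*$ has a unique representation $h = x^k f$ with $k \in \zz$ and $f \in S[x]$ satisfying $\ord f = 0$, and set $L'(h) := L(f)$ for a length function $L$ on $S[x]^*$. Because orders add, the canonical form of a product is $h_1 h_2 = x^{k_1 + k_2}(f_1 f_2)$ with $\ord(f_1 f_2) = 0$, so $L'(h_1 h_2) = L(f_1 f_2) \ge L(f_1) + L(f_2) = L'(h_1) + L'(h_2)$; and $L'(h) = 0$ iff $f \in S[x]^\times = S^\times$, i.e.\ iff $h \in S[x^{\pm 1}]^\times$. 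Finally, (c) $\Rightarrow$ (a) follows exactly as in the proof of Theorem~\ref{thm:ACCP transfers}: the monoid $M = \{sx^n \mid s \in S^* \text{ and } n \in \zz\}$ is a divisor-closed submonoid of $S[x^{\pm 1}]^*$, hence inherits the BFM property (a length function restricts to any divisor-closed submonoid, since there units are detected correctly), and because $S^*_{\red} \cong M_{\red}$ and being a BFM depends only on the reduced monoid, $S^*$ is a BFM.

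The bulk of the work is routine once the right length functions are written down, so the only genuine step is guessing the formula $L(f) = \deg f + \ell(c(f))$ in the first implication. I expect the main point requiring care to be the unit-detection property (i): one must check that the two contributions to the length, the degree and the coefficient value, vanish \emph{simultaneously} exactly on $S^\times$, rather than individually. Beyond this bookkeeping I anticipate no real obstacle.
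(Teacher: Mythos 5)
Your proposal is correct and follows essentially the same route as the paper's proof: the same length function $L(f) = \deg f + \ell(c(f))$ for (a) $\Rightarrow$ (b), the same order-stripping construction $L'(x^k f) = L(f)$ (the paper writes it as $\bar{\ell}(f) = \ell\big(f/x^{\operatorname{ord} f}\big)$) for (b) $\Rightarrow$ (c), and the same divisor-closed submonoid $M = \{sx^n \mid s \in S^*,\, n \in \zz\}$ with $S^*_{\red} \cong M_{\red}$ for (c) $\Rightarrow$ (a). The only cosmetic difference is that you spell out the divisor-closedness justification for restricting the BFM property, which the paper leaves implicit.
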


\begin{proof}
	(a) $\Rightarrow$ (b): Assume that $S$ is a BFS. Then there exists a length function $\ell \colon S^* \to \nn_0$. Let us argue now that the function $\ell_x \colon S[x]^* \to \nn_0$ given by $\ell_x(f) = \ell(c(f)) + \deg f$ is also a length function. Since $f \in S[x]^*$ is a unit if and only if $f = c(f)$ and $c(f) \in S^\times$, we see that for each $f \in S[x]^*$ the equality $\ell_x(f) = 0$ holds if and only if $f \in S[x]^\times$. Using now the fact that $\ell$ is a length function of $S^*$, for any $f, g \in S[x]^*$ we see that
	\[
		\ell_x(fg) = \ell(c(fg)) + \deg fg \ge (\ell(c(f)) + \deg f) + (\ell(c(g)) + \deg g) = \ell_x(f) + \ell_x(g).
	\]
	Therefore the map $\ell_x$ is a length function of $S[x]^*$, which implies that $S[x]$ is a BFS. 
	\smallskip
	
	(b) $\Rightarrow$ (c): Suppose now that $S[x]$ is a BFS, and let $\ell \colon S[x]^* \to \nn_0$ be a length function of $S[x]^*$. Proving that $S[x^{\pm 1}]$ is a BFS amounts to showing that the map
	\[
		\bar{\ell} \colon S[x^{\pm 1}]^* \to \nn_0 \quad \text{defined by} \quad \bar{\ell}(f) = \ell \Big( \frac{f}{x^{\text{ord} \, f}} \Big)
	\]
	 is a length function. For each $f \in S[x^{\pm 1}]^*$, we observe that $\bar{\ell}(f) = 0$ if and only if $f/x^{\text{ord} \, f}$ is a unit of $S[x]$, which happens precisely when $f$ is a unit in $S[x^{\pm 1}]$. In addition, for all $f, g \in S[x^{\pm 1}]^*$,
	\[
		\bar{\ell}(fg) = \ell \Big( \frac{fg}{x^{\text{ord} \, fg}} \Big) = \ell \Big( \frac{f}{x^{\text{ord} \, f}} \cdot  \frac{g}{x^{\text{ord} \, g}}  \Big) \ge \ell \Big( \frac{f}{x^{\text{ord} \, f}} \Big) + \ell \Big( \frac{g}{x^{\text{ord} \, g}}  \Big) = \bar{\ell}(f) + \bar{\ell}(g).
	\]
	As a consequence, $\bar{\ell}$ is a length function, and so $S[x^{\pm 1}]$ is a BFS.
	\smallskip
	
	(c) $\Rightarrow$ (a): This follows from the fact that $\{sx^n \mid s \in S^* \text{ and } n \in \zz\}$ is a divisor-closed submonoid of $S[x^{\pm 1}]^*$ whose reduced monoid is isomorphic to that of $S^*$.
\end{proof}

It is known that the class of BFDs is strictly contained in that one consisting of all integral domains satisfying ACCP. Moreover, it turns out that there are semidomains satisfying ACCP that are neither integral domains nor BFSs. Indeed, if $M := \big\langle \frac 1p \mid p \in \mathbb{P} \big\rangle$, then the semidomain $E(M)$ satisfies ACCP but it is not a BFS \cite[Example~4.15]{BG20}. 
\smallskip

We now turn our attention to the finite factorization property. For this property, the following theorem goes parallel to Theorem~\ref{thm:BF transfers}.

\begin{theorem} \label{thm:FF transfers}
	For a semidomain $S$, the following statements are equivalent.
	\begin{enumerate}
		\item[(a)] $S$ is an FFS.
		\smallskip
		
		\item[(b)] $S[x]$ is an FFS.
		\smallskip
		
		\item[(c)] $S[x^{\pm 1}]$ is an FFS.
	\end{enumerate}
\end{theorem}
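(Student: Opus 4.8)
The plan is to prove the cycle (a) $\Rightarrow$ (b) $\Rightarrow$ (c) $\Rightarrow$ (a), in parallel with Theorems~\ref{thm:atomicity transfers},~\ref{thm:ACCP transfers}, and~\ref{thm:BF transfers}. Throughout I would work with the description of the finite factorization property recorded in the introduction, namely that a semidomain is an FFS exactly when every nonzero element has only finitely many divisors up to associates; equivalently, a monoid is an FFM precisely when each element has finitely many non-associate divisors (this equivalence is standard and does not require assuming atomicity, since the divisor condition already forces ACCP, and hence atomicity). With this reformulation in hand, each implication reduces to controlling non-associate divisors.

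For (a) $\Rightarrow$ (b), the key device is to pass to a field of fractions. Fix an embedding $S \subseteq R$ into an integral domain and let $K$ be the field of fractions of $R$, so that $S[x] \subseteq R[x] \subseteq K[x]$ with $K[x]$ a UFD. Given a nonzero $f \in S[x]$, the polynomial $f$ has only finitely many monic divisors in $K[x]$; call this finite set $\mathcal{M}$. Now suppose $g$ divides $f$ in $S[x]$, say $f = gh$. Comparing leading coefficients gives $c(f) = c(g)\,c(h)$, so $c(g) \mid_S c(f)$; since $S$ is an FFS, $c(f)$ admits only finitely many non-associate divisors in $S^*$. On the other hand, $g$ divides $f$ in $K[x]$, so $g = c(g)\, m$ for a uniquely determined $m = g/c(g) \in \mathcal{M}$. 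Thus the assignment sending $g$ to the pair consisting of the $S$-associate class of $c(g)$ and the monic divisor $m$ is injective on $S[x]$-associate classes (if two divisors share the same $m$ and have $S$-associate leading coefficients, they differ by a unit of $S^\times = S[x]^\times$), and its target is finite. Hence $f$ has finitely many non-associate divisors in $S[x]^*$, and $S[x]$ is an FFS. I expect this to be the main obstacle: divisibility in $S[x]$ only directly constrains the leading (and, symmetrically, the trailing) coefficients of a divisor, and the mechanism that tames the middle coefficients is precisely the finiteness of $\mathcal{M}$ afforded by unique factorization in $K[x]$.

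For (b) $\Rightarrow$ (c), I would normalize Laurent polynomials by their order. A nonzero $F \in S[x^{\pm 1}]$ is associate in $S[x^{\pm 1}]$ to the polynomial $f := F/x^{\ord F} \in S[x]$, and the same normalization applies to divisors: if $G$ divides $f$ in $S[x^{\pm 1}]$, write $f = GH$, so that $\ord G + \ord H = \ord f = 0$, and set $g := G/x^{\ord G}$ and $h := H/x^{\ord H}$, both of order $0$ and lying in $S[x]$. Then $gh = x^{\ord G + \ord H} g h = f$, so $g$ divides $f$ in $S[x]$. The map $G \mapsto g$ descends to an injection from the set of $S[x^{\pm 1}]$-associate classes of divisors of $F$ into the set of $S[x]$-associate classes of divisors of $f$ (associate divisors $G$ produce associate normalizations $g$, and associate $g$ lift back to associate $G$ after reintroducing the $x$-powers). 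Since the latter set is finite by hypothesis (b), so is the former, and therefore $S[x^{\pm 1}]$ is an FFS.

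Finally, (c) $\Rightarrow$ (a) is the routine descent used throughout the section: the submonoid $M = \{sx^n \mid s \in S^* \text{ and } n \in \zz\}$ is divisor-closed in $S[x^{\pm 1}]^*$, and a divisor-closed submonoid of an FFM is again an FFM (its divisors in $M$ are in particular divisors in the ambient monoid). Since $M_{\red} \cong S^*_{\red}$ and the FFM property passes between a monoid and its reduced monoid, the multiplicative monoid $S^*$ is an FFM, that is, $S$ is an FFS.
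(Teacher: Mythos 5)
Your proposal is correct and matches the paper's proof in all essentials: it runs the same cycle of implications, relies on the same characterization of FFMs via finitely many non-associate divisors (\cite[Corollary~2]{fHK92}), uses the same order-normalization $G \mapsto G/x^{\ord G}$ for (b) $\Rightarrow$ (c), and descends via the same divisor-closed submonoid $\{sx^n \mid s \in S^*,\, n \in \zz\}$ for (c) $\Rightarrow$ (a). The only difference is cosmetic: for (a) $\Rightarrow$ (b) the paper argues by contradiction, extracting from an infinite family of non-associate divisors of $f_0$ a subfamily with a common leading coefficient and then invoking that $K[x]$ is an FFD, whereas you package the identical leading-coefficient-plus-$K[x]$ mechanism directly as an injection of $S[x]$-associate classes of divisors of $f$ into the finite set of pairs consisting of an associate class of a divisor of $c(f)$ in $S$ together with a monic divisor of $f$ in $K[x]$.
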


\begin{proof}
	(a) $\Rightarrow$ (b):  Suppose that $S$ is an FFS, and let $K$ be a field containing $S$. Assume, by contradiction, that $S[x]$ is not an FFS. Take a nonunit $f_0 \in S[x]^*$ such that $\big\{ g S[x]^\times \mid g \in S[x] \text{ and } g \mid_{S[x]} f_0 \big\}$ is infinite (this element exists by \cite[Corollary~2]{fHK92}). Now let $(f_n)_{n \in \nn}$ be a sequence whose terms are non-associate divisors of $f_0$ in $S[x]$. For each $n \in \nn_0$, let $s_n$ be the leading coefficient of $f_n$. As $s_n \mid_{S} s_0$ for every $n \in \nn$ and the set $\big\{t S^\times \in S^*_{\text{red}} \;\mid\; t \,|_{S^*}\, s_0 \big\}$ is finite by \cite[Corollary~2]{fHK92}, after replacing $(f_n)_{n \in \nn}$ by a subsequence we can assume that $s_n S^\times = s_1 S^\times$ for every $n \in \nn$. Then we can replace $f_n$ by $s_1 s_n^{-1} f_n$ for every $n \in \nn_{\ge 2}$ and assume that each term of $(f_n)_{n \in \nn}$ has leading coefficient $s_1$. Because $f_n \mid_{K[x]} f_0$ for every $n \in \nn$ and $K[x]$ is an FFD (in fact, a UFD), we can take $i,j \in \nn$ with $i \neq j$ and $f_i K^\times = f_j K^\times$. Since both $f_i$ and $f_j$ have leading coefficient $s_1$, we see that $f_i = f_j$, contradicting that they are not associates in $S[x]$. Hence $S[x]$ is an FFS.
	\smallskip
	
	(b) $\Rightarrow$ (c): Suppose that $S[x]$ is an FFS. By \cite[Corollary~2]{fHK92}, it suffices to show that every nonzero $f \in S[x]$ with $\text{ord} \, f = 0$ has only finitely many divisors in $S[x^{\pm 1}]$ up to associates. To do so, fix a nonzero $f \in S[x]$ with $\text{ord} \, f = 0$. Now assume that $x^{d_1} g_1$ and $x^{d_2} g_2$ are divisors of $f$ in $S[x^{\pm 1}]$ for some $d_1, d_2 \in \zz$ and $g_1, g_2 \in S[x]$ with $\text{ord} \, g_1 = \text{ord} \, g_2 = 0$. Observe that $g_1$ and $g_2$ divide $f$ in $S[x]$ and also that $x^{d_1} g_1$ and $x^{d_2} g_2$ are associates in $S[x^{\pm 1}]$ if and only if $g_1$ and $g_2$ are associates in $S[x]$. As $S[x]$ is an FFS, it follows from \cite[Corollary~2]{fHK92} that $f$ has only finitely many divisors in $S[x]$ up to associates, and so our previous observation ensures that $f$ has only finitely many divisors in $S[x^{\pm 1}]$ up to associates. Therefore $S[x^{\pm 1}]$ is an FFS.
	\smallskip
	
	(c) $\Rightarrow$ (a): Suppose that $S[x^{\pm 1}]$ is an FFS. Then $S$ is an FFS because $S^*$ and the divisor-closed submonoid $\{sx^n \mid s \in S^* \text{ and } n \in \zz\}$ of $S[x^{\pm 1}]^*$ have isomorphic reduced monoids.
\end{proof}

In the class of integral domains, the bounded factorization property does not imply the finite factorization property (see, for instance, \cite[Example~4.7]{AG22}). Hence the same statement must hold in the class of semidomains. As the following example illustrates, there are positive semidomains that are BFSs but not FFSs.

\begin{example} \label{ex:BFS not FFS}
	Fix $k \in \nn_{\ge 2}$ and observe that $S = \nn_0 \cup \rr_{\ge k}$ is a positive semiring. It follows from \cite[Theorem~5.1]{BCG21} that $S$ is a BFS with $\mathcal{A}(S) = \big( \mathbb{P}_{< k^2}  \cup [k, k^2) \big) \setminus \mathbb{P} \cdot S_{> 1}$. In addition, $S$ is a reduced semiring because $\min S^* = 1$. Showing that $S$ is not an FFS amounts to observing that the equality $(k+1)^2 = (\mu (k+1))(\mu^{-1}(k+1))$ yields a factorization of $(k+1)^2$ for each $\mu \in \rr_{> 1}$ close enough to $1$ such that $k < \mu^{-1} (k+1) < \mu(k+1) < k+2 \le k^2$.
\end{example}

In light of Example~\ref{ex:power series extensions}, the statements of Theorems~\ref{thm:BF transfers} and~\ref{thm:FF transfers} are not longer true if one replaces either $S[x]$ or $S[x^{\pm 1}]$ by $S\ldb x \rdb$.
\smallskip

The diagram of Figure~\ref{fig:BF/FF transfers} summarizes the results we have established in this section.

\begin{figure}[h]
	\begin{tikzcd}[cramped]
		S \textbf{ is an FFS } \  \arrow[r, Rightarrow]	\arrow[red, r, Leftarrow, "/"{anchor=center,sloped}, shift left=1.7ex] \arrow[d, Leftrightarrow] & \ S \textbf{ is a BFS } \arrow[d, Leftrightarrow]  \\ 
		S[x] \textbf{ is an FFS } \ \arrow[d, Leftrightarrow] \arrow[r, Rightarrow]	\arrow[red, r, Leftarrow, "/"{anchor=center,sloped}, shift left=1.7ex] & \ S[x] \textbf{ is a BFS } \arrow[d, Leftrightarrow] \\
		S[x^{\pm 1}] \textbf{ is an FFS } \ \arrow[r, Rightarrow] \arrow[red, r, Leftarrow, "/"{anchor=center,sloped}, shift left=1.7ex] & \ S[x^{\pm 1}] \textbf{ is a BFS } 
	\end{tikzcd}
	\caption{As proved in Theorems~\ref{thm:BF transfers} and~\ref{thm:FF transfers}, the vertical implications in the above diagram hold for every semidomain $S$. The same theorems, along with Example~\ref{ex:BFS not FFS}, ensure that none of the horizontal implications is reversible, which is illustrated by the red marked arrows.}
	\label{fig:BF/FF transfers}
\end{figure}
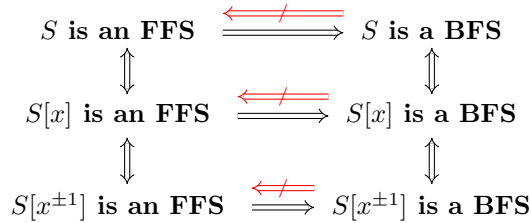

\bigskip
\section{Factoriality Properties}
\label{sec:factoriality}

It is well known that an integral domain $R$ is a UFD if and only if $R[x]$ is a UFD. However, the same result does not hold for the more general class of semidomains as indicated by the following example.

\begin{example} \label {ex: Nakayama's example}
	While the semidomain $\nn_0$ is a UFS, we will verify that the polynomial extension $\nn_0[x]$ is not. To do so, consider the polynomial $f(x) := x^5 + x^4 + x^3 + x^2 + x + 1 \in \nn_0[x]$. We can factor $f$ in $\nn_0[x]$ in the following two ways:
	\begin{equation} \label{eq:two factorizations of the same polynomials}
		f(x) = (x+1)(x^4 + x^2 + 1) \quad \text{ and } \quad f(x) = (x^2 + x + 1)(x^3 + 1).
	\end{equation}
	One can now verify that any decomposition of $x+1$, $x^2 + x + 1$, $x^3 + 1$, and $x^4 + x^2 + 1$ as a product of non-constant polynomials in $\cc[x]$ must contain a factor that does not belong to $\nn_0[x]$. Hence the decompositions in~\eqref{eq:two factorizations of the same polynomials} are actually distinct factorizations of $f$ in $\nn_0[x]$. Thus, $\nn_0[x]$ is not a UFS.
\end{example}

Recall that a semidomain $S$ is called length-factorial (or an LFS for short) if $S^*$ is an LFM; that is, $S^*$ is atomic and any two distinct factorizations of the same element of $S^*$ have distinct lengths. It was proved by Coykendall and Smith~\cite{CS11} that an integral domain is an LFS if and only if it is a UFS. As a result, the length-factorial property (somehow vacuously) ascends to (Laurent) polynomial domains. In this section, we prove that the class of semidomains where the length-factorial property ascends to (Laurent) polynomial semidomains is precisely the class consisting of integral domains. 

For the rest of the section, we identify a semidomain $S$ with a subsemiring of the integral domain $\mathcal{G}(S)$ (see Lemma~\ref{lem:characterization of integral semirings}). Given a semidomain $S$, let $n$ be the smallest positive integer such that the sum of $n$ copies of $1$ equals $0$ in $S$, and let $n$ be~$0$ if such a positive integer does not exist. As in the context of commutative rings with identities, we call~$n$ the \emph{characteristic} of $S$. 

We proceed to show that the irreducible polynomials in Example~\ref{ex: Nakayama's example} are still irreducible as polynomials in $S[x]$ for any semidomain~$S$ that is not an integral domain. Then we will take a look at two applications of this technical lemma.

\begin{lemma} \label{lemma: irreducible polynomials in Nakayama's example}
	Let $S$ be a semidomain that is not an integral domain. Then the polynomials
	\begin{equation} \label{eq:four polynomials}
		x+1, \quad x^2 + x + 1, \quad x^3 + 1, \quad \text{ and } \quad x^4 + x^2 + 1  
	\end{equation}
	are irreducible in $S[x]$.
\end{lemma}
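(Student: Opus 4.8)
The plan is to reduce everything to a single structural fact: since $S$ is not an integral domain, $-1 \notin S$. Indeed, if $-1$ were an element of $S$, then for every $s \in S$ we would have $-s = (-1)s \in S$, so the additive monoid $(S,+)$ would be closed under additive inverses and hence a group; this would make $S$ a subring of $\mathcal{G}(S)$, i.e.\ an integral domain, contrary to hypothesis. From $-1 \notin S$ I would first deduce that $S$ has characteristic $0$, since in characteristic $p > 0$ the sum of $p-1$ copies of $1$ equals $-1$ and lies in $S$, which is impossible. Consequently $\mathbb{Z} \hookrightarrow \mathcal{G}(S)$ and $\mathbb{Q}$ embeds into $K := \mathrm{Frac}(\mathcal{G}(S))$, so the four polynomials split into their familiar linear factors over an algebraic closure $\overline{K}$, with all roots being sixth roots of unity.

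The key lemma I would isolate is that the only sixth root of unity lying in $S$ is $1$. This again rests on $-1 \notin S$: a primitive cube root of unity $\omega \in S$ would force $\omega^2 = \omega\cdot\omega \in S$ and then $\omega + \omega^2 = -1 \in S$; a primitive sixth root $\zeta \in S$ would force $\zeta^2 \in S$ to be a primitive cube root, reducing to the previous case; and $-1$ itself is excluded outright.

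With these tools in hand I would run a uniform factorization argument. Suppose one of $x^2+x+1$, $x^3+1$, $x^4+x^2+1$ factors in $S[x]$ as $fg$ with both factors non-units. Comparing leading coefficients (the polynomials are monic) shows $c(f)c(g)=1$, so both leading coefficients are units of $S$; after rescaling $f$ and $g$ by a unit and its inverse I may assume $f$ and $g$ are monic, whence both have degree $\ge 1$. Viewing $f$ and $g$ inside $\overline{K}[x]$, each is a product of linear factors $x - r$ with $r$ a sixth root of unity, so the constant term $f(0)$ is, up to sign, a product of sixth roots of unity and hence itself a sixth root of unity; since $f(0)g(0) = 1$ it is a unit of $S$, and by the key lemma $f(0) = 1 = g(0)$. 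I then finish by inspection: for $x^2+x+1$ a monic linear factor would have constant term $-r = 1$, forcing the root $r = -1$, which is not a root of $x^2+x+1$; the same obstruction kills the degree-one split of $x^3+1$ and of $x^4+x^2+1$. In the remaining splits the constraint $f(0)=g(0)=1$ pins down the factor pair, and in each case the complementary factor is forced to be an associate of $x^2 - x + 1$ (via $x^3+1 = (x+1)(x^2-x+1)$ and $x^4+x^2+1 = (x^2+x+1)(x^2-x+1)$), whose coefficient $-1$ does not belong to $S$; this contradiction shows that no non-trivial factorization exists. The assertion for $x+1$ is immediate, since a degree-one polynomial can only factor with a constant factor, which must then be a unit.

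The main obstacle I anticipate is not the case analysis but making the two structural inputs airtight: verifying cleanly that $-1 \notin S$ upgrades to ``the only sixth root of unity in $S$ is $1$'', and justifying that passing to $\overline{K}[x]$ is legitimate, namely that the characteristic is $0$ and that the monic factors of each polynomial in $S[x] \subseteq \overline{K}[x]$ really are products of the linear cyclotomic factors. Once these are secured, the contradiction $-1 \in S$ is produced in every case by a short computation.
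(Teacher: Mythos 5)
Your proposal is correct in substance but takes a genuinely different route from the paper's. The paper never leaves $S[x]$: for each polynomial it writes a hypothetical factorization with undetermined coefficients, extracts the resulting identities (e.g., $ad + bc = bd = ac = 1$ for $x^2+x+1$), and manipulates them inside $\mathcal{G}(S)$ until it manufactures $-1 \in S$ --- the same structural contradiction you rely on, since $-1 \in S$ forces $(S,+)$ to be a group and hence $S$ to be an integral domain. You instead pass to $\overline{K}[x]$ where $K$ is the fraction field of $\mathcal{G}(S)$, note that all roots of the four polynomials are sixth roots of unity, isolate the key lemma that $1$ is the only sixth root of unity in $S$ (your deductions $\omega + \omega^2 = -1$ and $\zeta^2 = \omega$ are exactly right), reduce to monic factors via $c(f)c(g)=1$, and then enumerate the possible monic factor pairs, killing each one either through the constraint $f(0) = g(0) = 1$ or because the complementary factor is $x^2 - x + 1 \notin S[x]$. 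Your approach is more conceptual and generalizes readily to other products of cyclotomic polynomials; the paper's coefficient-bashing is more elementary (no field extensions, no appeal to splitting) but must be redone case by case. Both proofs establish characteristic $0$ first, by essentially the same observation.

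One sentence of yours needs repair: the ``same obstruction'' ($r = -1$ failing to be a root) does \emph{not} kill the degree-one split of $x^3+1$, because $-1$ \emph{is} a root of $x^3+1$; the split $x^3 + 1 = (x+1)(x^2-x+1)$ passes your constant-term test with $f(0) = g(0) = 1$. Your very next clause supplies the correct argument --- the complementary factor $x^2 - x + 1$ has a coefficient $-1 \notin S$ --- so no idea is actually missing, but as written the sentence misattributes which obstruction applies. In a final write-up you should state explicitly that for the $1{+}2$ split of $x^3+1$, and likewise for the $2{+}2$ split of $x^4+x^2+1$ (where the mixed root pairings are excluded by the key lemma since their constant terms are $-1$ or primitive sixth roots of unity), the contradiction comes from the forced factor $x^2 - x + 1$ failing to lie in $S[x]$, not from the root constraint.
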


\begin{proof}
	Note that if $S$ had finite characteristic, then every element of $S$ would have an additive inverse, which contradicts the hypothesis that $S$ is not an integral domain. Consequently, $S$ has characteristic~$0$. Also, observe that any element of $S$ dividing any of the polynomials in~\eqref{eq:four polynomials} must be a unit. Let us analyze each polynomial $p(x)$ in \eqref{eq:four polynomials} independently.
	
	\smallskip
	\noindent \textsc{Case 1:} $p(x) = x+1$. This case immediately follows from the fact that the polynomial $p(x)$ is indecomposable in $S[x]$ along with our previous observation that every constant factor of $p(x)$ in $S[x]$ is a unit.
	
	\smallskip
	\noindent \textsc{Case 2:} $p(x) = x^2 + x + 1$. Suppose, towards a contradiction, that the polynomial $p(x)$ is not irreducible in $S[x]$. As in \textsc{Case 1}, we see that $p(x)$ is not divisible in $S[x]$ by any nonunit of $S$. Then we can write $p(x) = (ax + b)(cx + d)$ for some $a,b,c,d \in S^*$, from which we obtain the identities $ad + bc = ac = bd = 1$. Thus,
	\[
		ab = ab\left((ad)(ac) + (bc)(bd)\right) = abcd(a^2 + b^2) = a^2 + b^2.
	\]
	Therefore $b^3 = ab^2 - a^2b$ in $\mathcal{G}(S)$, and we can use this identity to obtain
	\[
		b^3c^3 = (ab^2 - a^2b)c^3 = b^2c^2 - bc = bc(bc - 1) = bc(-ad) = -1.
	\]
	However, $-1 = b^3 c^3 \in S$ implies that $S$ is an integral domain, a contradiction.
	
	\smallskip
	\noindent \textsc{Case 3:} $p(x) = x^3 + 1$. Suppose, by way of contradiction, that $p(x)$ reduces in $S[x]$. Since $p(x)$ is not divisible in $S[x]$ by any nonunit of~$S$, we can write $p(x) = (ax^2 + bx + c)(dx + e)$ for some $a,b,c,d,e \in S$. Expanding the product, we obtain the identities $ce = ad = 1$ and $ae + bd = be + cd = 0$, whence
	\[
		0 = cd(ae + bd) = 1 + bcd^2.
	\]
	This implies that $-1 = bcd^2 \in S$, which contradicts that $S$ is not an integral domain.
	
	\smallskip
	\noindent \textsc{Case 4:} $p(x) = x^4 + x^2 + 1$. Suppose, by way of contradiction, that $p(x)$ reduces in $S[x]$. As $p(x)$ is not divisible in $S[x]$ by any nonunit of~$S$, it follows that $p(x)$ factors in $S[x]$ either as a polynomial of degree~$1$ times a polynomial of degree~$3$, or into two polynomials of degree~$2$, yielding the following two subcases.
	
	\smallskip
	\noindent \textsc{Case 4.1:} $p(x) = (ax^3 + bx^2 + cx + d)(ex + f)$ for some $a,b,c,d,e,f \in S$. After expanding this product, we obtain the identities $ae = df = 1$ and $cf + ed = 0$. Therefore
	\[
		0 = af(cf + ed) = acf^2 + (ae)(df) = acf^2 + 1.
	\]
	This implies that $-1 = acf^2 \in S$, which contradicts that $S$ is not an integral domain.
	
	\smallskip
	\noindent \textsc{Case 4.2:} $p(x) = (ax^2 + bx + c)(dx^2 + ex + f)$ for some $a,b,c,d,e,f \in S$. Observe that if $b = e = 0$, we can generate a contradiction by reducing this case to Case 2. Thus, we can assume, without loss of generality, that $e \neq 0$. After unfolding the product $(ax^2 + bx + c)(dx^2 + ex + f)$, we obtain the identities $ad = cf = af + be + cd = 1$ and $ae + bd = bf + ce = 0$. Since $d \neq 0$ and
	\[
		d(a^2 e + b) = (ad)(ae) + bd = ae + bd = 0,
	\]
	the equality $a^2 e + b = 0$ holds. Similarly, we can obtain the equality $c^2 e + b = 0$ (letting $f$ and $c^2e +b$ playing the roles of $d$ and $a^2e + b$, respectively). As a consequence, the fact that $e \neq 0$ ensures that $a^2 = c^2$. Hence either $a = c$ or $a = -c$ in $\mathcal{G}(S)$. Let us assume first that $a = c$. Using this assumption, we obtain that
	\[
		1 + be = (ad + cf - 1) + (1 - af - cd) = (a - c)(d - f) = 0.
	\]
	This implies that $-1 = be \in S$, which contradicts that $S$ is not an integral domain. Assume now that $a = -c$. Using this assumption, we obtain that
	\[
		3 = ad + cf + (af + be + cd) = a(d + f) + c(d + f) + be = (a + c)(d + f) + be = be.
	\]
	This implies that $-1 = 2 - be = 2 - (1 - af - cd) = af + cd + 1 \in S$, which once again contradicts the fact that $S$ is not an integral domain.
\end{proof}

\begin{prop} \label{prop:LFS polynomial semiring has coefficients in an integral domain}
	Let $S$ be a semidomain. If $S[x]$ is an LFS, then $S$ is an integral domain.
\end{prop}

\begin{proof}
	This is an immediate consequence of Lemma~\ref{lemma: irreducible polynomials in Nakayama's example}.
\end{proof}

Recall that a semidomain $S$ is called a GCD-semidomain if $S^*$ is a GCD-monoid. It is well known and not hard to verify that every UFM is a GCD-monoid. Hence every UFS is a GCD-semidomain. As another application of Lemma~\ref{lemma: irreducible polynomials in Nakayama's example}, we will characterize, for a semidomain $S$, when $S[x]$ is a GCD-semidomain.

\begin{prop} \label{prop:gcd}
	For a semidomain $S$, the following statements are equivalent.
	\begin{enumerate}
		\item[(a)] $S$ is a GCD-domain.
		\smallskip
		
		\item[(b)] $S[x]$ is a GCD-semidomain.
	\end{enumerate}
\end{prop}

\begin{proof}
	(a) $\Rightarrow$ (b): Assume that $S$ is a GCD-domain. It follows from \cite[Theorem~6.4]{GP74} that $S[x]$ is also a GCD-domain, and so $S[x]$ is a GCD-semidomain.
	\smallskip
	
	(b) $\Rightarrow$ (a): Assume now that $S[x]$ is a GCD-semidomain. Because $S^*$ is a divisor-closed submonoid of $S[x]^*$, it follows that $S^*$ is a GCD-monoid. 
	
	We are done once we argue that $S$ is an integral domain. Suppose, by way of contradiction, that this is not the case. Then it follows from Lemma~\ref{lemma: irreducible polynomials in Nakayama's example} that the polynomial $x+1$ is an irreducible element of $S[x]^*$. On the other hand, the equality $(x+1)(x^4 + x^2 + 1) = (x^2 + x + 1)(x^3 + 1)$ in~\eqref{eq: different factors same monomial} ensures that $x+1$ is not a prime in $S[x]^*$. Therefore \cite[Theorem~6.7(2)]{rG84} guarantees that $S[x]^*$ is not a GCD-monoid. However, this contradicts the fact that $S[x]$ is a GCD-semidomain. Hence $S$ is an integral domain.
\end{proof}

Unlike the property of being an MCD-semidomain, the property of being a GCD-semidomain does not ascend to polynomial semidomains.

\begin{example}
	The monoid $\nn_0$ is a UFS and, therefore, a GCD-semidomain. However, $\nn_0[x]$ is not a GCD-semidomain. Indeed, we have seen in the proof of Proposition~\ref{prop:gcd} that the polynomial $x+1$ is an irreducible element in $\nn_0[x]$ that is not prime, and so the multiplicative monoid $\nn_0[x]^*$ is not a GCD-monoid. Alternatively, it is not hard to verify that the polynomials $f(x) := (x+1)(x^4 + x^2 + 1)$ and $g(x) = (x+1)(x^3 + 1)$ do not have a greatest common divisor in $\nn_0[x]$: in fact, their only common divisors are $x+1$ and $x^3+1$, but $x+1 \nmid_{\nn_0[x]} x^3 + 1$ and $x^3 + 1 \nmid_{\nn_0[x]} x+1$.
\end{example}

Now we are in a position to characterize in several ways when $S[x]$ (or $S[x^{\pm 1}]$) is length-factorial.

\begin{theorem} \label{UFD}
	For a semidomain $S$, the following statements are equivalent.
	\begin{enumerate}
		\item[(a)] $S$ is a UFD.
		\smallskip
		
		\item[(b)] $S[x]$ is a UFS.
		\smallskip
		
		\item[(c)] $S[x^{\pm 1}]$ is a UFS.
		\smallskip
		
		\item[(d)] $S$ is an LFD.
		\smallskip
		
		\item[(e)] $S[x]$ is an LFS.
		\smallskip
		
		\item[(f)] $S[x^{\pm 1}]$ is an LFS.
	\end{enumerate}
\end{theorem}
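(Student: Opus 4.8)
The plan is to prove (a) $\Rightarrow$ each of (b)--(f) directly and to route every reverse implication through condition (d), invoking the theorem of Coykendall and Smith \cite{CS11} only for the single step (d) $\Rightarrow$ (a). For the forward implications I would first recall that every UFM is an LFM, so that being a UFS always entails being an LFS; this yields (b) $\Rightarrow$ (e) and (c) $\Rightarrow$ (f) at once, and also (a) $\Rightarrow$ (d), since a UFD is an integral domain whose multiplicative monoid is a UFM, hence an LFM, making $S$ an LFD. To get (a) $\Rightarrow$ (b) and (a) $\Rightarrow$ (c) I would use the classical Gauss theorem, by which $S[x]$ is a UFD whenever $S$ is, together with the observation that $S[x^{\pm 1}]$ is the localization of $S[x]$ at $\{x^n : n \in \nn\}$ and that a localization of a UFD is again a UFD; since for integral domains the notions UFD and UFS coincide, this delivers (b) and (c). The equivalence (a) $\Leftrightarrow$ (d) for integral domains is exactly \cite{CS11}.

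The substance of the argument lies in the two descent steps (e) $\Rightarrow$ (d) and (f) $\Rightarrow$ (d). For (e) $\Rightarrow$ (d), Corollary~\ref{cor: LFS polynomial semiring has coefficients in an integral domain} already forces $S$ to be an integral domain, so only the transfer of length-factoriality from $S[x]$ down to $S$ remains. The key observation is that length-factoriality descends to divisor-closed submonoids: if $N$ is divisor-closed in $M$, then $\mathcal{A}(N) = \mathcal{A}(M) \cap N$, and every factorization in $M$ of an element of $N$ already consists of factors lying in $N$, so that $\mathsf{Z}_N(b) = \mathsf{Z}_M(b)$ for each $b \in N$; consequently $M$ being an LFM forces $N$ to be one as well. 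Applying this to the divisor-closed submonoid $S^*$ of $S[x]^*$ shows that $S$ is an LFS, and combined with integrality this makes $S$ an LFD.

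For (f) $\Rightarrow$ (d) I would first establish a Laurent analogue of Corollary~\ref{cor: LFS polynomial semiring has coefficients in an integral domain}. Were $S$ not an integral domain, then by Lemma~\ref{lemma: irreducible polynomials in Nakayama's example} the four polynomials in~\eqref{eq:four polynomials} are irreducible in $S[x]$; since $\mathcal{A}(S[x]) \subseteq \mathcal{A}(S[x^{\pm 1}]) \cup S[x^{\pm 1}]^\times$ (established in the proof of Theorem~\ref{thm:atomicity transfers}) and none of these polynomials is a unit of $S[x^{\pm 1}]$, they remain atoms in $S[x^{\pm 1}]$. Moreover, as all four have order $0$ and pairwise distinct degrees, comparing orders and degrees shows that they stay pairwise non-associate in $S[x^{\pm 1}]$. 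Hence the two decompositions in~\eqref{eq:two factorizations of the same polynomials} are two genuinely distinct factorizations of $f$ of the common length~$2$, contradicting that $S[x^{\pm 1}]$ is an LFS; therefore $S$ is an integral domain. Applying the same divisor-closed descent to the submonoid $\{sx^n : s \in S^*,\, n \in \zz\}$ of $S[x^{\pm 1}]^*$, whose reduced monoid is isomorphic to $S^*_{\red}$, then shows $S$ is an LFS and thus an LFD. Finally (d) $\Rightarrow$ (a) via \cite{CS11} closes every loop.

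The main obstacle I anticipate is the bookkeeping in this Laurent analogue: one must check both that the four polynomials survive as atoms after adjoining the extra units $ux^n$ and that they remain pairwise non-associate there, so that the two factorizations do not collapse. By contrast, the divisor-closed descent of length-factoriality, though conceptually the linchpin that makes all six conditions collapse to integrality plus unique factorization, is an entirely routine verification in the spirit of the descent arguments already used for ACCP, boundedness, and finiteness of factorizations.
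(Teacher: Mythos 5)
Your proposal is correct, and it organizes the proof differently from the paper even though the two share the key ingredients (the polynomials of Lemma~\ref{lemma: irreducible polynomials in Nakayama's example} and the Coykendall--Smith theorem \cite{CS11}). The paper proves (a)~$\Leftrightarrow$~(b) on its own via GCD theory: from $S[x]$ a UFS it deduces that $S[x]$ is an atomic GCD-semidomain, invokes Theorem~\ref{thm:atomicity transfers} and Proposition~\ref{prop:gcd} to get that $S$ is an atomic GCD-domain, hence a UFD; it then handles the Laurent conditions (c) and (f) wholesale through the monoid isomorphism $S[x]^* \cong \nn_0 \times M$ with $M = \{f \in S[x]^* \mid \mathrm{ord}\, f = 0\}$, whose reduced monoid matches that of $S[x^{\pm 1}]^*$, and uses \cite{CS11} twice (for (a)~$\Leftrightarrow$~(d) and inside (e)~$\Rightarrow$~(b)). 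You instead funnel every reverse implication through (d), use \cite{CS11} exactly once, and dispense with Proposition~\ref{prop:gcd} entirely, since the unique-factorization descent (b)~$\Rightarrow$~(a) is subsumed by (b)~$\Rightarrow$~(e)~$\Rightarrow$~(d): Corollary~\ref{cor: LFS polynomial semiring has coefficients in an integral domain} forces integrality, and length-factoriality descends along the divisor-closed submonoids $S^* \subseteq S[x]^*$ and $\{sx^n \mid s \in S^*,\, n \in \zz\} \subseteq S[x^{\pm 1}]^*$ because $\mathcal{A}(N) = \mathcal{A}(M) \cap N$ and $\mathsf{Z}_N(b) = \mathsf{Z}_M(b)$ for $N$ divisor-closed in $M$ --- a routine verification in the same spirit as the paper's descent arguments for ACCP, BF, and FF. The price you pay is the Laurent bookkeeping that the paper's $\nn_0 \times M$ trick avoids: you must check that the four polynomials in~\eqref{eq:four polynomials} survive as atoms of $S[x^{\pm 1}]$ (via the inclusion $\mathcal{A}(S[x]) \subseteq \mathcal{A}(S[x^{\pm 1}]) \cup S[x^{\pm 1}]^\times$ from the proof of Theorem~\ref{thm:atomicity transfers}) and remain pairwise non-associate there; your degree-and-order argument settles this correctly, since an associate relation $g = ux^n h$ between order-zero polynomials forces $n = 0$ and hence equal degrees, so the two length-two factorizations in~\eqref{eq:two factorizations of the same polynomials} stay distinct in $S[x^{\pm 1}]$. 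Likewise your direct proofs of (a)~$\Rightarrow$~(c) via Nagata-type localization of $S[x]$ at the powers of $x$ is sound. On balance, the paper's route yields Proposition~\ref{prop:gcd} and the standalone equivalence (a)~$\Leftrightarrow$~(b) as byproducts of independent interest and treats both Laurent transfers uniformly, while yours is more economical for the theorem itself and makes the descent mechanism explicit.
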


\begin{proof}
	(a) $\Leftrightarrow$ (b): The direct implication follows from the well-known fact that the unique factorization property ascends to polynomial rings. For the reverse implication, suppose that $S[x]$ is a UFS. Then $S[x]$ is an atomic GCD-semidomain. Thus, $S$ is atomic by Theorem~\ref{thm:atomicity transfers}. In addition, it follows from Proposition~\ref{prop:gcd} that~$S$ is a GCD-domain, which concludes our argument given that every atomic GCD-domain is a UFD (see \cite[page~114]{fHK}).
	\smallskip
	
	(a) $\Leftrightarrow$ (d): It follows from \cite[Corollary~2.11]{CS11}.
	\smallskip
	
	(b) $\Leftrightarrow$ (c): Consider the multiplicative monoid $M = \{f \in S[x]^* \mid \text{ord} \, f = 0\}$, and observe that $S[x]^*$ is isomorphic to the product monoid $\nn_0 \times M$ via the map $f \mapsto (\text{ord} \, f, f/x^{\text{ord} \, f})$. It is clear that $S[x]$ is a UFS if and only if $M$ is a UFM. Now the equivalence follows from the fact that $M$ and $S[x^{\pm 1}]^*$ have isomorphic reduced monoids.
	\smallskip
	
	(b) $\Leftrightarrow$ (e): The direct implication follows from definitions. For the reverse implication, suppose that $S[x]$ is an LFS. In this case, $S$ must be an integral domain by virtue of Proposition~\ref{prop:LFS polynomial semiring has coefficients in an integral domain}. Hence $S[x]$ is an integral domain. Therefore it follows from \cite[Corollary~2.11]{CS11} that $S[x]$ is a UFD.
	\smallskip
	
	(e) $\Leftrightarrow$ (f): This follows similarly to (b) $\Leftrightarrow$ (c), after observing that, under the same notation used to prove the later, $S[x]$ is an LFS if and only if $M$ is an LFM.
\end{proof}

As we have mentioned before, the multiplicative monoid of an integral domain is a UFM if and only if it is an LFM (see~\cite{CS11}). However, a similar statement is not settled if we replace the class of integral domains for the larger class of semidomains. Motivated by this, we pose the following question.

\begin{question}\footnote{A negative answer to this question has been recently given by Bu, Vulakh, and Zhao in \cite{BVZ23}.}
	Is there an LFS that is not a UFS?
\end{question}

Recall that a semidomain $S$ is a half-factorial semidomain (or an HFS for short) provided that $S^*$ is an HFM; that is, $S^*$ is atomic and any two factorizations of the same element of $S^*$ have the same length. The half-factorial property does not behave well under polynomial extensions even in the context of integral domains (see \cite[Theorem~2.4]{aZ80} and \cite[Example~5.4]{AAZ91}). On the other hand, a semidomain $S[x]$ satisfying that $(S,+)$ is reduced is ``far" from being half-factorial in a sense that we now explain. One of the most studied arithmetic statistics related to the sets of lengths of an atomic monoid is the elasticity. The elasticity, first studied by Steffan~\cite{jlS86} and Valenza~\cite{rV90} in the context of algebraic number theory, measures the deviation of an atomic monoid from being half-factorial. Let $M$ be an 
atomic monoid. The \emph{elasticity} of an element $b \in M \setminus \mathcal{U}(M)$, denoted by $\rho(b)$, is defined as
\[
	\rho(b) = \frac{\sup \mathsf{L}(b)}{\inf \mathsf{L}(b)}.
\]
By convention, we set $\rho(u) = 1$ for every $u \in \uu(M)$. Observe that $\rho(b) \in \qq_{\ge 1} \cup \{\infty\}$ for all $b \in M$. In addition, the \emph{elasticity} of the whole monoid $M$ is defined to be
\[
	\rho(M) := \sup \{\rho(b) \mid b \in M \}.
\]
Observe that a monoid $M$ is an HFM if and only if $\rho(M) = 1$. Thus, we can think of the elasticity as an arithmetic statistic to measure how far an atomic monoid is from being an HFM, in which case, an atomic monoid having infinite elasticity is as far from being half-factorial as it can possibly be. On the other hand, the \emph{set of elasticities} of $M$ is $R(M) := \{\rho(b) \mid b \in M\}$, and $M$ is said to have \emph{full elasticity} provided that $R(M) = \qq \cap [1, \rho(M)]$. As a monoid is half-factorial if and only if its set of elasticities is a singleton, namely $\{1\}$, we observe that for a given monoid having full elasticity provides an indication that it is as far from being an HFM as it can possibly be.

The following proposition is a generalized version of \cite[Theorem~2.3]{CCMS09}, and here we adapt the proof given in \cite{CCMS09} to fit the more general setting of polynomials over semidomains.

\begin{prop} \label{prop: elasticity of reduced integral semirings}
	Let $S$ be a semidomain such that $S[x]$ is atomic. Then $S[x]$ has full and infinite elasticity provided that $(S,+)$ is reduced. 
\end{prop}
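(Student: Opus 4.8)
The plan is to establish full and infinite elasticity for $S[x]$ by exhibiting, for each target elasticity value, an explicit element of $S[x]$ whose set of factorization lengths realizes that value. Since $(S,+)$ is reduced, the additive structure forces strong rigidity on how coefficients can split: a polynomial with nonnegative-type coefficients cannot be factored using cancellation, so divisibility is tightly controlled. The idea, following the adaptation of \cite[Theorem~2.3]{CCMS09}, is to use polynomials of the form $x^n - 1$-analogues that are available in $S[x]$, namely products like $1 + x + \cdots + x^{k}$ together with their various cyclotomic-style refactorizations, to produce elements with two factorizations of very different lengths.

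First I would fix the key building blocks. Because $(S,+)$ is reduced, I would argue that for any $n$ the polynomial $x^n + x^{n-1} + \cdots + x + 1 \in S[x]$ admits genuinely distinct factorizations into irreducibles of $S[x]$, mirroring the situation in Example~\ref{ex: Nakayama's example} and Lemma~\ref{lemma: irreducible polynomials in Nakayama's example}. The crucial point is that the irreducibility behavior established in Lemma~\ref{lemma: irreducible polynomials in Nakayama's example} (which holds whenever $S$ is not an integral domain, and the reduced hypothesis guarantees this unless $S$ is trivial) lets me treat short factors like $x+1$, $x^2+x+1$, $x^3+1$ as atoms. Then, by taking suitable powers and products of such polynomials, I would construct for each rational $q \ge 1$ an element whose shortest and longest factorizations have lengths in the ratio $q$. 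To reach infinite elasticity, I would let the ratio of lengths grow without bound by increasing the degree of the building block.

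The concrete scheme I would use is: for each pair of positive integers realizing a given rational target, build a polynomial $f$ that factors one way into few high-degree atoms and another way into many low-degree atoms, so that $\sup \mathsf{L}(f)/\inf \mathsf{L}(f)$ equals the target. The reduced hypothesis is exactly what prevents extraneous factorizations from collapsing the length spread — without it, additive inverses could merge factors. I would verify that the constructed factorizations are genuinely into irreducibles of $S[x]$ (using degree bookkeeping and the constant-factor observation that every constant divisor of these monic-type polynomials is a unit), and that no factorization of intermediate length is forced, so the set of realized elasticities fills the interval $[1, \rho(S[x])] \cap (\qq \cup \{\infty\})$.

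The hard part will be the careful verification that the candidate factorizations consist of bona fide irreducibles in $S[x]$ and that I have correctly computed both $\inf \mathsf{L}(f)$ and $\sup \mathsf{L}(f)$ for each constructed element — in particular, ruling out shorter or longer factorizations than the two I write down. This requires leveraging the reduced structure of $(S,+)$ to constrain coefficient interactions (no cancellation), together with the irreducibility results of Lemma~\ref{lemma: irreducible polynomials in Nakayama's example}, to pin the length extremes exactly. Establishing that every rational in the elasticity interval is attained (full elasticity) rather than merely that the supremum is infinite will be the most delicate bookkeeping, since it demands a flexible family of examples parametrized by the numerator and denominator of the target ratio.
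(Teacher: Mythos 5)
Your high-level strategy is the right one (two factorizations of a common element with controlled length ratio $(k+n)/(k+1)$, using the reduced hypothesis to exclude polynomials with ``negative'' coefficients from $S[x]$, and noting that reducedness already forces $S$ not to be an integral domain so Lemma~\ref{lemma: irreducible polynomials in Nakayama's example} applies). But there is a genuine gap: the specific building blocks you propose cannot produce any length spread at all. The atoms coming from Lemma~\ref{lemma: irreducible polynomials in Nakayama's example} are, as elements of $K[x]$ for a field $K \supseteq S$, the products $\Phi_2$, $\Phi_3$, $\Phi_2\Phi_6$, and $\Phi_3\Phi_6$, where $\Phi_6 = x^2 - x + 1 \notin S[x]$. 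In any factorization in $S[x]$ of an element of the form $\Phi_2^a\Phi_3^b\Phi_6^c$, each copy of $\Phi_6$ must ride along with exactly one companion $\Phi_2$ or $\Phi_3$ (it can never stand alone, and attaching two or more copies, e.g.\ $(x^2-x+1)^2(x+1)^2 = (x^3+1)^2$, yields a decomposable polynomial), so every factorization has length exactly $a + b$. In other words, the divisor-closed piece generated by cyclotomic-style regroupings of $1 + x + \cdots + x^k$ is half-factorial: the two factorizations in Example~\ref{ex: Nakayama's example} have equal length (that is why they witness failure of length-factoriality, not of half-factoriality). So ``letting the degree of the building block grow'' never moves the elasticity off $1$, and neither infinite nor full elasticity can be reached this way.

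The missing idea, which is the engine of the paper's proof, is a family of irreducibles containing \emph{unboundedly many} $K[x]$-irreducible factors, obtained from a threshold phenomenon: by \cite[Lemma~2.1]{CCMS09}, $(x+n)^m(x^2-x+1) \in \nn_0[x]$ if and only if $m \ge n$, and since $(S,+)$ is reduced (so $S$ has characteristic $0$, $\nn_0 \subseteq S$, and no negative integer lies in $S$), the polynomial $(x+n)^n(x^2-x+1)$ is irreducible in $S[x]$. Then $f(x) = (x+n)^n(x^2-x+1)(x+1)^k$ has exactly the two factorizations $\bigl[(x+n)^n(x^2-x+1)\bigr]\cdot[x+1]^k$ and $[x+n]^n \cdot \bigl[(x^2-x+1)(x+1)\bigr]\cdot[x+1]^{k-1}$, of lengths $k+1$ and $k+n$, and $\{(k+n)/(k+1) \mid k,n \in \nn\} = \qq_{\ge 1}$ gives full and infinite elasticity in one stroke. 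Note that the atom $x^3+1$ (Lemma~\ref{lemma: irreducible polynomials in Nakayama's example}) is still needed here to absorb the factor $x^2 - x + 1$ in the long factorization; your proposal correctly flags the verification burden (irreducibility, and ruling out intermediate lengths via the $K[x]$-UFD structure) but defers precisely the construction that makes those verifications possible.
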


\begin{proof}
	As we pointed out before, if $S$ has finite characteristic, then $(S,+)$ is not reduced (it is, in fact, a group). Consequently, $S$ must have characteristic~$0$. Now let $K$ be a field containing $S$ as a subsemiring. We first claim that for every $n \in \nn_{\ge 2}$, the polynomial $(x + n)^n (x^2 - x + 1)$ is an irreducible element in $S[x]$. It follows from \cite[Lemma~2.1]{CCMS09} that for every $m \in \nn_0$ the polynomial $(x + n)^m (x^2 - x +1)$ belongs to $\nn_0[x]$ if and only if $m \ge n$. This, together with the fact that $S$ is a semidomain whose additive monoid is reduced, guarantees that $(x + n)^m (x^2 - x + 1) \notin S[x]$ when $m < n$. Therefore the fact that $K[x]$ is a UFD guarantees that $(x+n)^n(x^2 - x + 1)$ is an irreducible element in $S[x]$.
	
	By Lemma~\ref{lemma: irreducible polynomials in Nakayama's example}, the polynomial $(x^2 - x + 1)(x+1) = x^3 + 1$ is irreducible in $S[x]$. Now for $n, k \in \nn$, consider the polynomial
	\[
		f(x) := (x+n)^n(x^2 - x + 1)(x + 1)^k \in \nn_0[x] \subseteq S[x].
	\]
	As every divisor of $f(x)$ in $S[x]$ is a divisor of $f(x)$ in $K[x]$ and $K[x]$ is a UFD, the only two factorizations of $f(x)$ in $S[x]$ are $[(x+n)^n (x^2 - x + 1)] \cdot [x+1]^k$ and $[x+n]^n \cdot [(x^2 - x + 1)(x+1)] \cdot [x+1]^{k-1}$, which have lengths $k+1$ and $k+n$, respectively. Since $\{ (k+n)/(k+1) \mid k,n \in \nn \} = \qq_{\ge 1}$, we conclude that $S[x]$ has full and infinite elasticity.
\end{proof}

\bigskip
\section*{Acknowledgments}

During the preparation of this paper, the first author was supported by the NSF awards DMS-1903069 and DMS-2213323, while the second author was supported by the University of Florida Mathematics Department Fellowship. The authors kindly thank an anonymous referee for helpful suggestions.

\bigskip


\begin{thebibliography}{20}
	
	\bibitem{ABP21} S. Albizu-Campos, J. Bringas, and H. Polo: \emph{On the atomic structure of exponential Puiseux monoids and semirings}, Comm. Algebra \textbf{49} (2021) 850--863.

	\bibitem{AAZ90} D.~D. Anderson, D.~F. Anderson, and M. Zafrullah: \emph{Factorizations in integral domains}, J. Pure Appl. Algebra \textbf{69} (1990) 1--19.
	
	\bibitem{AAZ92} D.~D. Anderson, D.~F. Anderson, and M. Zafrullah: \emph{Factorizations in integral domains II}, J. Algebra \textbf{152} (1992) 78--93.
	
	\bibitem{AAZ91} D.~D. Anderson, D.~F. Anderson, and M. Zafrullah: \emph{Rings between $D[X]$ and $K[X]$}, Houston J. Math. \textbf{17} (1991) 109--129.

	\bibitem{AG22} D. F. Anderson and F. Gotti: \emph{Bounded and finite factorization domains}. In: Rings, Monoids, and Module Theory (Eds. A. Badawi and J. Coykendall), pp. 7--57, Springer Proceedings in Mathematics \& Statistics, Vol. \textbf{382}, Singapore, 2022.
	
	\bibitem{BCG21} N. R. Baeth, S. T. Chapman, and F. Gotti: \emph{Bi-atomic classes of positive semirings}, Semigroup Forum \textbf{103} (2021) 1--23. 

	\bibitem{BG20} N. R. Baeth and F. Gotti: \emph{Factorizations in upper triangular matrices over information semialgebras}, J. Algebra \textbf{562} (2020) 466--496.
	
	\bibitem{BC19} J. G. Boynton and J. Coykendall: \emph{An example of an atomic pullback without the ACCP}, J. Pure Appl. Algebra \textbf{223} (2019) 619--625.

	\bibitem{hB13} H. Brunotte: \emph{On some classes of polynomials with nonnegative coefficients and a given factor}, Period. Math. Hungar. \textbf{67} (2013) 15--32.	

	\bibitem{BVZ23} A. Bu, J. Vulakh, and A. Zhao: \emph{Length-factoriality and pure irreducibility}, Comm. Algebra \textbf{51} (2023) 3745--3755.

	\bibitem{CF19} F. Campanini and A. Facchini: \emph{Factorizations of polynomials with integral non-negative coefficients}, Semigroup Forum \textbf{99} (2019) 317--332.

	\bibitem{CCMS09} P. Cesarz, S. T. Chapman, S. McAdam, and G. J. Schaeffer: \emph{Elastic properties of some semirings defined by positive systems}. In: Commutative Algebra and Its Applications (Eds. M. Fontana, S. E. Kabbaj, B. Olberding, and I. Swanson), pp. 89--101, Proceedings of the Fifth International Fez Conference on Commutative Algebra and its Applications, Walter de Gruyter, Berlin, 2009.

	\bibitem{CCGS21} S.~T. Chapman, J. Coykendall, F. Gotti, and W. W. Smith: \emph{Length-factoriality in commutative monoids and integral domains}, J. Algebra \textbf{578} (2021) 186--212.
	
	\bibitem{CGG20} S. T. Chapman, F. Gotti, and M. Gotti: \emph{Factorization invariants of Puiseux monoids generated by geometric sequences}, Comm. Algebra \textbf{48} (2020) 380--396.
	
	\bibitem{CGG21} S.~T. Chapman, F. Gotti, and M. Gotti: \emph{When is a Puiseux monoid atomic?}, Amer. Math. Monthly \textbf{128} (2021) 302--321.
	
	\bibitem{pC68} P. M. Cohn: \emph{Bezout rings and and their subrings}, Proc. Cambridge Philos. Soc. \textbf{64} (1968) 251--264.

	\bibitem{CG22} J. Correa-Morris and F. Gotti: \emph{On the additive structure of algebraic valuations of polynomial semirings}, J. Pure Appl. Algebra \textbf{226} (2022) 107104. 

	\bibitem{CG19} J. Coykendall and F. Gotti: \emph{On the atomicity of monoid algebras}, J. Algebra \textbf{539} (2019) 138--151.

	\bibitem{CS11} J. Coykendall and W.W. Smith: \emph{On unique factorization domains}, J. Algebra \textbf{332} (2011) 62--70.
	
	\bibitem{DKV09} M. Droste, W. Kuich, and H. Vogler: \emph{Handbook of Weighted Automata}, Springer-Verlag, 2009.
	

	\bibitem{GH06} A.~Geroldinger and F.~Halter-Koch: \emph{Non-unique Factorizations: Algebraic, Combinatorial and Analytic Theory}, Pure and Applied Mathematics Vol. 278, Chapman \& Hall/CRC, Boca Raton, 2006.

	\bibitem{GZ21} A. Geroldinger and Q. Zhong: \emph{A characterization of length-factorial Krull monoids}, New York J. Math. \textbf{27} (2021) 1347--1374.

	\bibitem{rG84} R. Gilmer: \emph{Commutative Semigroup Rings}, The University of Chicago Press, 1984.
	
	\bibitem{GP74} R.~Gilmer and T.~Parker: \emph{Divisibility properties of semigroup rings}, Michigan Math. J. \textbf{21} (1974) 65--86.

	\bibitem{JG1999} J. S. Golan: \emph{Semirings and their Applications}, Kluwer Academic Publishers, 1999. 

	\bibitem{fG20} F. Gotti: \emph{Geometric and combinatorial aspects of submonoids of a finite-rank free commutative monoid}, Linear Algebra Appl. \textbf{604} (2020) 146--186.
	
	\bibitem{GL21} F. Gotti and B. Li: \emph{Atomic semigroup rings and the ascending chain condition on principal ideals}. Proc. Amer. Math. Soc. \textbf{151} (2023) 2291--2302.
	
	\bibitem{GL23} F. Gotti and B. Li: \emph{Divisibility and a weak ascending chain condition on principal ideals}. Submitted. Preprint on arXiv: https://arxiv.org/abs/2212.06213.
	
	\bibitem{GL22} F. Gotti and B. Li: \emph{Divisibility in rings of integer-valued polynomials}, New York J. Math. \textbf{28} (2022) 117--139.

	\bibitem{aG74} A. Grams: \emph{Atomic rings and the ascending chain condition for principal ideals}, Math. Proc. Cambridge Philos. Soc. \textbf{75} (1974) 321--329.

	\bibitem{fHK92} F. Halter-Koch: \emph{Finiteness theorems for factorizations}, Semigroup Forum \textbf{44} (1992) 112--117.
	
	\bibitem{fHK} F. Halter-Koch: \emph{Ideal Systems. An Introduction to Multiplicative Ideal Theory}, Marcel Dekker Inc., 1998.
	
	\bibitem{HL94} W. J. Heinzer and D. C. Lantz: \emph{ACCP in polynomial rings: a counterexample}, Proc. Amer. Math. Soc. \textbf{121} (1994) 975--977.
	
	\bibitem{JLZ22} N. Jiang, B. Li, and S. Zhu: \emph{On the primality and elasticity of algebraic valuations of cyclic free semirings}, Internat. J. Algebra Comput. \textbf{33} (2023) 197--210.
	
	\bibitem{nL19} N. Lebowitz-Lockard: \emph{On domains with properties weaker than atomicity}, Comm. Algebra \textbf{47} (2019) 1862--1868.
		
	\bibitem{hP23} H. Polo: \emph{Factorization invariants of the additive structure of exponential Puiseux semirings}, J. Algebra Appl. \textbf{22} (2023) 2350077.
	
	\bibitem{vP15} V. Ponomarenko: \emph{Arithmetic of semigroup semirings}, Ukrainian Math. J. \textbf{67} (2015) 243--266.

	\bibitem{mR93} M. Roitman: \emph{Polynomial extensions of atomic domains}, J. Pure Appl. Algebra \textbf{87} (1993) 187--199.
	
	\bibitem{mR2000} M. Roitman: \emph{On the atomic property for power series rings}, J. Pure Appl. Algebra \textbf{145} (2000) 309--319.

	\bibitem{jlS86} J.~L. Steffan: \emph{Longueurs des d\'ecompositions en produits d'\'el\'ements irr\'eductibles dans un anneau de Dedekind}, J. Algebra \textbf{102} (1986) 229--236.
	
	\bibitem{rV90} R. Valenza: \emph{Elasticity of factorization in number fields}, J. Number Theory \textbf{36} (1990) 212--218.

	\bibitem{aZ82} A. Zaks: \emph{Atomic rings without a.c.c. on principal ideals}, J. Algebra \textbf{80} (1982) 223--231.

	\bibitem{aZ76} A. Zaks: \emph{Half-factorial domains}, Bull. Amer. Math. Soc. \textbf{82} (1976) 721--723.

	\bibitem{aZ80} A. Zaks: \emph{Half-factorial domains}, Israel J. of Math. \textbf{37} (1980) 281--302.

	\bibitem{sZ22} S. Zhu: \emph{Factorizations in evaluation monoids of Laurent semirings}, Comm. Algebra \textbf{50} (2022) 2719--2730. 

\end{thebibliography}
\end{document}